\newtheorem{theorem}{Theorem}[section]
\theoremstyle{remark}
\numberwithin{equation}{section}
\begin{document}

\title
{Finite sum of weighted composition operators with closed range}

\author{\sc S. Shamsigamchi}\author{\sc A. Alishahi} \author{\sc A. Ebadian}
\address{\sc Department of mathematics, Payame Noor University } \address{\sc Department of mathematics, Payame Noor University }
\email{saeedeh.shamsi@gmail.com} \email{}\email{ebadian.ali@gmail.com}

\thanks{}

\subjclass[2010]{47B33}

\keywords{Weighted composition operators, closed range operators, invertible operators.}

\date{}

\dedicatory{}

\commby{}

\begin{abstract}
In this paper, first we characterize closedness of range of the finite sum of weighted composition operators between different $L^p$-spaces. Then we discuss polar decomposition and invertibility of these operators.
\end{abstract}

\maketitle
\section{introduction}
Weighted composition operators are a general class of operators and they appear naturally in the study of surjective isometries on most of the function spaces,
semigroup theory, dynamical systems, Brennans conjecture, etc. This type of operators are a generalization of multiplication operators and composition operators.\\
 There are many great papers on the investigation of weighted composition operators acting on the spaces of measurable functions. For instance, one can see \cite{bon,dls,ey,ej,sh,gh,j1,ls,lau6,n2}. Also, some basic properties of weighted composition operators on $L^{p}$-spaces were studied by Parrott \cite{la}, Nordgern \cite{la2}, Singh and Manhas \cite{lau}, Takagi \cite{n1} and some other mathematicians. As far as we know finite sum of weighted composition operators were studied on $L^p$-spaces by Jabbarzadeh and Estaremi in \cite{gh1}. Also we investigated some basic properties of these operators in \cite{bon}.

Let $(X, \Sigma, \mu)$ be a $\sigma$-finite measure space.
We denote the linear space of all complex-valued
$\Sigma$-measurable functions on $X$ by $L^0(\Sigma)$. For any $\sigma$-finite subalgebra $\mathcal{A}\subseteq\Sigma$ such that $(X, \mathcal{A}, \mu_{\mathcal{A}})$ is also $\sigma$-finite , the
conditional expectation operator associated with $\mathcal{A}$ is
the mapping $f\rightarrow E^{\mathcal{A}}f$, defined for all
non-negative $f$ as well as for all $f\in L^p(\Sigma)$, $1\leq
p\leq \infty$, where $E^{\mathcal{A}}f$ is the unique
$\mathcal{A}$-measurable function satisfying
$$\int_{A}fd\mu=\int_{A}E^{\mathcal{A}}fd\mu, \ \ \  A\in \mathcal{A}.$$

As an operator on $L^{p}({\Sigma})$, $E^{\mathcal{A}}$ is
idempotent and $E^{\mathcal{A}}(L^p(\Sigma))=L^p(\mathcal{A})$.
For more details on the properties of $E^{\mathcal{A}}$ see
\cite{lambe} and \cite{rao}.
For a measurable function $u:X\rightarrow \mathcal{C}$ and non-singular measurable transformation $\varphi:X\rightarrow X$,  i.e, the measure $\mu\circ \varphi^{-1}$ is absolutely continuous with respect to $\mu$,  we can define an operator $uC_{\varphi}:L^p(\Sigma)\rightarrow L^0(\Sigma)$ with $uC_{\varphi}(f)=u.f\circ \varphi$ and it is called a weighted composition operator. For non-singular measurable transformations $\{\varphi_i\}^{n}_{i=1}$, we put $W=\sum_{i=1}^{n}u_iC_{\varphi_i}$.\\
In this paper, we are going to give some sufficient and necessary condition for closedness of range of finite sum of weighted composition operators between different $L^p$-spaces. Moreover, we compute the polar decomposition of these operators on $L^2$. Finally we talk a bit about invertibility and injectivity.

\section{main results}
In this section first we give an equivalent condition  for closedness of range on the Hilbert space $L^2$.
\begin{theorem}
Let  $W=\sum_{i=1}^{n}u_{i}C_{\varphi_{i}}$ be a bounded operator on $L^{2}(\mu)$ and $u_{i}(\varphi_{j}^{-1})=0 ,~~~ i\neq j$. The following statements are equivalent.
\begin{enumerate}
\item[(a)] $W$ has closed range.
\item[(b)] There is a constant $c>0$ such that $J=\sum_{i=1}^{n}h_{i}E_{i}(|u_{i}|^{2})\circ \varphi_{i}^{-1} \geq c ~~~~~\mu - a.e$ on $Coz J=\{x\in X: J(x)\neq 0\}$.
\end{enumerate}
\end{theorem}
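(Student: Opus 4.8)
The plan is to reduce the closed-range question for $W$ to the same question for the positive operator $W^{*}W$, which the hypothesis forces to be an ordinary multiplication operator. First I would record the adjoint formula for a single weighted composition operator: writing $h_i=d(\mu\circ\varphi_i^{-1})/d\mu$ and letting $E_i$ denote the conditional expectation onto $\varphi_i^{-1}(\Sigma)$, a short computation with $(u_iC_{\varphi_i})^{*}=C_{\varphi_i}^{*}\,\overline{u_i}$ and the defining relation of $E_i$ gives $(u_iC_{\varphi_i})^{*}(u_iC_{\varphi_i})f=h_i\,(E_i(\abs{u_i}^{2})\circ\varphi_i^{-1})\,f$. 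Thus each diagonal block of $W^{*}W$ is multiplication by $h_iE_i(\abs{u_i}^{2})\circ\varphi_i^{-1}$, and summing over $i$ produces precisely multiplication by $J$.

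The crucial point is that the off-diagonal blocks $(u_iC_{\varphi_i})^{*}(u_jC_{\varphi_j})$ with $i\neq j$ must vanish. Here I would invoke the hypothesis $u_i(\varphi_j^{-1})=0$ for $i\neq j$: since $(u_iC_{\varphi_i})^{*}(u_jC_{\varphi_j})=0$ is equivalent to $\mathrm{ran}(u_jC_{\varphi_j})\subseteq\mathrm{ran}(u_iC_{\varphi_i})^{\perp}$, the hypothesis is exactly what is needed to make the ranges of the individual operators $u_iC_{\varphi_i}$ pairwise orthogonal, so that every cross term in the expansion $W^{*}W=\sum_{i,j}(u_iC_{\varphi_i})^{*}(u_jC_{\varphi_j})$ drops out. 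Consequently $W^{*}W=M_J$, multiplication by the nonnegative function $J$. I expect this verification --- pushing the cozero/support information through the conditional expectations $E_i$ and the changes of variable by $\varphi_i^{-1}$, while tracking null sets --- to be the main obstacle, since it is the only place the somewhat delicate hypothesis is actually used.

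With $W^{*}W=M_J$ in hand, I would invoke the standard fact that a bounded Hilbert-space operator $T$ has closed range if and only if $T^{*}T$ does (indeed $\ker T=\ker T^{*}T$ and $\overline{\mathrm{ran}\,T^{*}}=\overline{\mathrm{ran}\,T^{*}T}$, and closedness transfers across these). Applying this with $T=W$ shows that (a) holds if and only if $M_J$ has closed range, so it remains only to characterize closed range for a nonnegative multiplication operator. For (b)$\Rightarrow$(a): if $J\geq c$ on $Coz J$, then $\mathrm{ran}\,M_J=\set{g:g=g\,\chi_{Coz J}}$, because the map $g\mapsto (g/J)\chi_{Coz J}$ is a bounded right inverse with $\norm{\cdot}\leq 1/c$; this range is a closed subspace. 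For (a)$\Rightarrow$(b), I would argue by contraposition: if no such $c$ exists, each set $A_n=\set{0<J<1/n}$ has positive measure, so by $\sigma$-finiteness I can pick $B_n\subseteq A_n$ with $0<\mu(B_n)<\infty$ and form the unit vectors $f_n=\chi_{B_n}/\sqrt{\mu(B_n)}$. These lie in $(\ker M_J)^{\perp}$ yet satisfy $\norm{M_Jf_n}\leq 1/n\to 0$, so $M_J$ is not bounded below on $(\ker M_J)^{\perp}$ and its range is not closed. Combining the two implications with the reduction closes the equivalence.
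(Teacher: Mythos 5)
Your proposal is correct and follows essentially the same route as the paper: both arguments rest on the identity $W^{*}W=M_{J}$ (which the paper likewise asserts without carrying out the cross-term verification you flag as the main obstacle) and then reduce closedness of the range to $J$ being essentially bounded below on $Coz\,J$, tested against characteristic functions of subsets of $Coz\,J$. Your packaging via the general equivalence ``$\mathrm{ran}\,T$ closed iff $\mathrm{ran}\,T^{*}T$ closed'' and the contrapositive construction with the sets $\{0<J<1/n\}$ is only a cosmetic variant of the paper's direct argument that $W$ is bounded below on $(\ker W)^{\perp}=L^{2}_{|Coz\,J}(\mu)$.
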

\begin{proof}
 $(b)\Rightarrow (a)$ Suppose that there is some constant $c>0$ such that $J\geq 0 ~~~~\mu-a.e$ on $Coz J$. We know that $\ker W \subseteq L^{2}_{|X\backslash Coz J}(\mu) $.  Since $W^{*}Wf=Jf$ for every $f\in L^{2}(\mu)$,
\begin{eqnarray}
\nonumber
\|Wf\|_{2}^{2} &=& (Wf , Wf)\\
 \nonumber & =& (W^{*}Wf , f)\\
\nonumber &=& \int_{X}J|f|^{2}d\mu = \int_{Coz J}J|f|^{2}d\mu+\int_{X\backslash Coz J}J|f|^{2}d\mu\\
\nonumber &\geq & c\|f\|_{2}^{2}.\\
\nonumber
\end{eqnarray}
Obviously $W_{| J}$ is injective and $W_{| J}(L^{2}_{\mid J}(\mu))$ is closed in $L^{2}(\mu)$, where $L^{2}_{\mid J}(\mu)=\{f\in L^{2}(\mu)~;~ f=0~ on~ X\backslash J\}$. Since $\ker W=L^{2}_{|X\backslash Coz J}(\mu) $, $W(L^{2}(\mu))$ must be closed in $L^{2}(\mu)$.\\
$(a)\Rightarrow (b)$ Assume $W$ has closed range. Then $W_{| J}(L^{2}_{| J}(\mu))$ is closed in $L^{2}(\mu)$. Since $W_{| J}$ is injective so there exists a constant $d>0$ such that $\|W_{| J}\|_{2}\geq d \|f\|_{2}$  for any $f\in L^{2}(\mu)$. Take $c=\frac{d^{2}}{n}$, $(b)$ follows immediately once we show that for any $E\in \Sigma$ with $E\subset Coz J$,  $\int_{E}J d\mu\geq c\mu(E)$. PicK any $E\in \Sigma$ with $E\subset J$. We may assume $\mu(E)<\infty$. Then $\chi_{E}\in L^{2}_{| J}(\mu)$ and $n\int_{E}J d\mu=n\int_{X}J \chi_{E}d\mu\geq \|W_{| J}\chi_{E}\|\geq d^{2}\|\chi_{E}\|_{2}^{2}=d^{2}\mu(E)$ so $\int_{E}J d\mu \geq c \mu(E)$.
\end{proof}
Now we find some necessary and sufficient conditions for closedness of range when the operator act on the $L^p$ with $p>1$.

\begin{theorem}
Let  $W=\sum_{i=1}^{m}u_{i}C_{\varphi_{i}}$ be a bounded operator on $L^{p}(\mu)$ with $p>1$. Then the  followings hold.
\begin{enumerate}
\item[(a)]  If $J(B)=0, ~~\mu -a.e$  and $\sum_{i\in \Bbb N}J(A_{i})\mu(A_{i})<\infty$  then $W$ has closed range.
\item[(b)] If  $W$  has closed range and is injective then $J(B)=0, ~~\mu -a.e$ .
\item[(c)] Let $\mu(X)<\infty$. If  $W$  has closed range and is injective then  there exists a constant $\delta>0$ such that $u=\sum_{i=1}^{n}u_{i}^{p}\geq \delta$ ox $X$.
\end{enumerate}
\end{theorem}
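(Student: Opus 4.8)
The engine for the whole theorem is the standard criterion that a bounded operator on a Banach space has closed range exactly when it is bounded below off its kernel, which for an \emph{injective} operator reduces to the existence of $d>0$ with $\norm{Wf}_p\ge d\norm{f}_p$ for all $f\in L^p(\mu)$. My plan is therefore to reduce each hypothesis to a comparison between $\norm{Wf}_p^p$ and a weighted integral of $\abs{f}^p$, in the spirit of the identity $W^{*}Wf=Jf$ that drove the $L^2$ case. The computational step I would set up first is the change-of-variables identity for each summand: writing $h_i$ for $d(\mu\circ\varphi_i^{-1})/d\mu$ and $E_i$ for the conditional expectation attached to $\varphi_i^{-1}\Sigma$, one gets $\norm{u_i\,(\chi_E\circ\varphi_i)}_p^p=\int_E h_i\,E_i(\abs{u_i}^p)\circ\varphi_i^{-1}\,d\mu$, so that testing $W$ against indicators $\chi_E$ naturally produces the multiplier $J=\sum_i h_i\,E_i(\abs{u_i}^p)\circ\varphi_i^{-1}$ and singles out the continuous part $B$ of $X$ and its atoms $A_i$ as the right test sets.

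For (a) I would establish closedness directly from the decomposition $X=B\cup\bigcup_i A_i$. The hypothesis $J(B)=0$ $\mu$-a.e. places $L^p$ of the continuous part inside $\ker W$ and confines the effective action of $W$ to the atoms, where $L^p(\mu)$ becomes a weighted sequence space and $W$ acts through the scalars $J(A_i)$. I would then show that $W(L^p(\mu))$ is the closure of the span of the images of the atom indicators and that a Cauchy sequence in the range has its limit back in the range, the finiteness $\sum_i J(A_i)\mu(A_i)<\infty$ being exactly what controls the tail of the relevant series and forces closedness rather than mere density.

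For (b) I would argue by contraposition with the subdivision technique of the first theorem. Using non-atomicity, any subset of $B$ of positive measure on which $J\ne0$ can be split into pieces $E$ of arbitrarily small measure; feeding the normalized indicators $\chi_E/\mu(E)^{1/p}$ into the comparison set up above and letting $\mu(E)\to0$ produces a normalized sequence along which $W$ fails to be bounded below, which is incompatible with $W$ being simultaneously injective and closed-ranged. Hence $J$ must vanish $\mu$-a.e. on $B$.

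For (c) I would combine the two mechanisms. Part (b) makes the continuous part negligible, so up to a null set $X$ is purely atomic; injectivity forces $J>0$ a.e. and closedness, via the indicator test above, upgrades this to $J\ge c>0$ a.e. The remaining and most delicate step is to convert this lower bound on the multiplier $J$ into the pointwise bound $u=\sum_i u_i^{\,p}\ge\delta$, and this is exactly where I expect the main obstacle to lie: $J$ and $u$ are not pointwise comparable in general — a change of variables only shows they share the same integral over $X$, since $J$ carries the extra factors $h_i$ and $E_i\circ\varphi_i^{-1}$ — and the clean diagonalization $\norm{Wf}_p^p=\int_X J\abs{f}^p\,d\mu$ available on $L^2$ is lost on $L^p$ to the cross terms of $\bigl|\sum_i u_i(f\circ\varphi_i)\bigr|^p$. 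To bridge the gap I would exploit $\mu(X)<\infty$ together with the purely atomic structure, testing $\norm{Wf}_p\ge d\norm{f}_p$ against indicators of the atoms and of the sublevel sets $\set{x\in X:u(x)<\delta}$ and using Hölder and triangle estimates on the cross terms to extract a uniform positive lower bound for $u$.
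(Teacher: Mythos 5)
Your proposal for part (a) is essentially the paper's own argument and is sound as far as it goes: restrict attention to the atoms (since $J(B)=0$ puts $L^p$ of the non-atomic part in $\ker W$), extract by diagonalization a subsequence with $f_{n_k}(A_i)\to\alpha_i$ for each $i$, put $f=\sum_i\alpha_i\chi_{A_i}$ in $L^p$ by Fatou, and pass to the limit in $\norm{W(f_{n_k}-f)}_p^p\le m^{p-1}\sum_i J(A_i)\abs{f_{n_k}(A_i)-\alpha_i}^p\mu(A_i)$ using the summability hypothesis. No complaint there.

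The genuine gaps are in (b) and (c). For (b), the mechanism you propose --- take $E\subseteq\set{x\in B: J(x)\ge\delta}$ with $\mu(E)$ small and feed $\chi_E/\mu(E)^{1/p}$ into the lower bound --- cannot work when the operator maps $L^p$ to the \emph{same} $L^p$. The only estimate available is the upper bound $\norm{W\chi_E}_p^p\le m^{p-1}\int_E J\,d\mu$, and on a set where $J$ is trapped between two positive constants this is comparable to $\mu(E)=\norm{\chi_E}_p^p$; the ratio $\norm{W\chi_E}_p/\norm{\chi_E}_p$ is thus invariant under shrinking $E$ and does not tend to $0$. The smallness of $\mu(E)$ only bites when source and target exponents differ (the $L^p\to L^q$, $q<p$, setting), and indeed statement (b) as written already fails for $W=I$ (take $n=1$, $u_1\equiv1$, $\varphi_1=\mathrm{id}$: injective, closed range, $J\equiv1$ on $B$). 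The paper's own proof of (b) has the same defect (its final bound $\norm{W_{\mid G}f_\alpha}_p<N\alpha\norm{f_\alpha}_p$ has $N=N(\alpha)$ with $N\alpha$ bounded below by roughly $m^{(p-1)/p}\delta^{1/p}$, so nothing tends to zero), but that does not rescue your argument. For (c) you correctly identify the obstruction --- the test function $\chi_X$ only yields the integral bound $\int_X u\,d\mu\ge d^p\mu(X)/n^{p-1}$, which does not give $u\ge\delta$ pointwise --- but your proposed repair does not close it: for $F=\set{x:u(x)<\delta}$ the function $W\chi_F$ is supported on $\bigcup_i\varphi_i^{-1}(F)$, not on $F$, so combining $\norm{W\chi_F}_p\ge d\norm{\chi_F}_p$ with $\norm{W\chi_F}_p^p\le m^{p-1}\int_F J\,d\mu$ produces the pointwise conclusion $J\ge d^p/m^{p-1}$ a.e., a statement about $J$ and not about $u=\sum_i u_i^p$; the factors $h_i$ and $E_i(\cdot)\circ\varphi_i^{-1}$ cannot be stripped off pointwise. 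So (c) remains open in your write-up (the paper's proof of (c) makes exactly this unjustified leap from the integral bound to the pointwise one).
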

\begin{proof}
 $(a)$ Take any sequence $(Wf_{n})_{n \in \Bbb N}$ in $W(L^{p}(\mu))$ with $\|f_{n}\|<1$ and $\|Wf_n-g\|\rightarrow 0$. For a fixed $i\in \Bbb N$ the sequence $(f_{n}(A_{i}))_{n \in \Bbb N}$ is bounded by $\frac{1}{\sqrt[p]{\mu(A_{i})}}$. So we can find a subsequence $(f_{n_{k}})_{k \in \Bbb N}$ such  that with each fixed $i$, $f_{n_{k}}(A_{i})\rightarrow \alpha_{i}$ for some  $\alpha_{i} \in \Bbb C$. Define $f= \sum_{i=1}^{\infty}\alpha_{i} \chi_{A_{i}}$. By Fatous lemma we have $\int_{X}|f|^{p}d\mu \leq \lim\inf_{k\rightarrow \infty}\int_{X}|f_{n_{k}}|^{p}d\mu \leq 1 $, or $f \in L^{p}(\mu)$. Then we have
\begin{eqnarray}
\nonumber \|g-Wf\|_{p}&\leq& \|g-Wf_{n}\|_{p}+\|Wf_{n}-Wf_{n_{k}}\|_{p}+\|Wf_{n_{k}}-Wf\|_{p}\\
 \nonumber &\leq& \frac{\epsilon}{3}+\frac{\epsilon}{3}+\int_{X}|W(f_{n_{k}}-f)|^{p}d\mu\\
  \nonumber &\leq& \frac{\epsilon}{3}+\frac{\epsilon}{3}+m^{p-1}\int_{X}J|f_{n_{k}}-f|^{p}d\mu\\
  \nonumber &= &\frac{\epsilon}{3}+\frac{\epsilon}{3}+m^{p-1}\sum_{i=1}^{\infty}J(A_{i})|f_{n_{k}}(A_{i})-\alpha_{i}|^{p}\mu(A_{i})\\
 \nonumber &\longrightarrow&  0\\
   \nonumber
\end{eqnarray}
 Obviously  $W(L^{p}(\mu))$ is closed in $L^{p}(\mu)$. \\
 $(b)$ Suppose on the contrary, $\mu(\{x\in B~:~J(x)>0\})>0$. Then there exists $\delta>0$ such that the set $G=\{x\in B~:~J(x)\geq\delta\}$ has positive measure. We assume $\mu(G)<\infty$. Moreover, as $G$ is non atomic, we can further assume that $\mu(X\backslash G)>0$. Consider the Banach space $L^{p}_{\mid_{G}}(\mu)$ and the operator $W_{\mid_{G}}$ defined on $L^{p}_{\mid_{G}}(\mu)$. We claim that $W_{\mid_{G}}(L^{p}_{\mid_{G}}(\mu))$ is closed in $L^{p}(\mu)$. To prove we take  any convergent sequence $(W_{\mid_{G}}(f_{n}))_{n\in \Bbb N}$ in $W\mid_{G}(L^{p}_{\mid_{G}}(\mu))$. Let $g\in L^{p}(\mu)$ satisfy
 $\|W_{\mid_{G}}(f_{n})-g\|_{p}\rightarrow 0$ as $n\rightarrow \infty$. Note that $(W_{\mid_{G}}(f_{n}))_{n\in \Bbb N}$ can be regarded as a sequence in $W(L^{p}(\mu))$. The closedness of range of $W$ yields an $f\in L^{p}(\mu)$ with $g=Wf~~ \mu - a.e$ On $X$. Then assume $W$ has closed range and is injective so there exists a constant $d>0$ such that $\|W_{\mid_{G}}(f_{n})-Wf\|_{p}\geq d\|f_{n}-f\|_{p}$. As $\|W_{\mid_{ G}}(f_{n})-g\|_{p}=\|W_{\mid_{G}}(f_{n})-Wf\|_{p}=0$ and $\|f_{n}-f\|_{p}^{p}=\int_{G}|f_{n}-f|^{p}d\mu+\int_{X\backslash G}|f_{n}-f|^{p}d\mu$ we have that $\int_{X\backslash G}|f|^{p}d\mu=0$ and so $f\in L^{p}_{\mid_{G}}(\mu)$. Then there exists some constant $c>0$ such that $\|W_{\mid_{G}}f\|_{p}\geq c\|f\|_{p}$ for all $f\in L^{p}_{\mid_{G}}(\mu)$. We claim that this is impossible by showing that for any $\alpha>0 $, there is some $f_{\alpha}\in L^{p}_{\mid_{G}}(\mu)$ satisfying $\|W_{\mid_{G}}f\|_{p}<c\|f\|_{p}$. For any $n\in \Bbb N$, define $G_{n}=\{x\in G~;~\left(\frac{(n-1)\alpha}{m^{\frac{p-1}{p}}}\right)^{p}\leq J(x)\leq \left(\frac{n\alpha}{m^{\frac{p-1}{p}}}\right)^{p}\}$. Then $G=\left(\cup_{n\in \Bbb N}G_{n}\right)\cup\{x\in G ~;~ J(x)=\infty\}$. Since $W$ is a bounded operator on $L^{p}(\mu)$ so $J$ is finite valued $\mu$-a.e on $X$, then we have $\mu(\{x\in G ~;~ J(x)=\infty\})=0$. Now as $\mu(G)>0$, $\mu(G_{N})>0$ for some $N\in \Bbb N$. Since $G_{N}$ is non- atomic, for any $\alpha>0$, we can choose some set $E_{\alpha}\in \Sigma$ such that $E_{\alpha}\subseteq G_{N}$ and $\mu(E_{\alpha})\leq\mu (G_{N})$. Take $f_{\alpha}=\chi_{E_{\alpha}}$. Obviously $f_{\alpha}\in L^{p}_{\mid_{G}}(\mu)$. Moreover $\|W_{\mid_{G}}f_{\alpha}\|_{p}\leq m^{\frac{p-1}{p}}\left(\int_{X}J|f_{\alpha}|^{p}d\mu\right)^{\frac{1}{p}}<m^{\frac{p-1}{p}}\left(\frac{N\alpha}{m^{\frac{p-1}{p}}}\right)\|f_{\alpha}\|_{p}=N\alpha \|f_{\alpha}\|_{p}$. This prove our  claim  and therefore we must have $J=0, ~~\mu -a.e$ on $B$.     \\
$(c)$ Assume $W$ has closed range and is injective so there exists a constant $d>0$ such that $\|Wf\|_{p}\geq d \|f\|_{p}$  for any $f\in L^{p}(\mu)$.
\begin{eqnarray}
\nonumber n^{p-1}\int_{X}\sum_{i=1}^{n}u_{i}^{p} d\mu&\geq & n^{p-1}\sum_{i=1}^{n}\int_{\varphi_{i}^{-1}(X)}u_{i}^{p}d\mu\\
\nonumber & = &n^{p-1}\sum_{i=1}^{n}\int_{X}u_{i}^{p}\chi_{\varphi_{i}^{-1}(X)}d\mu\\
\nonumber &\geq &\int_{X}|W\chi_{X}|^{p}d\mu\\
\nonumber &= &\|W\chi_{X}\|_{p}^{p}\\
 \nonumber &\geq & d^{p}\|\chi_{X}\|_{p}^{p}=d^{p}\mu(X)\\
\nonumber
\end{eqnarray}
 so $u \geq \delta$ on $X$ . The proof is now complete.
\end{proof}
Here we give some necessary and sufficient conditions for closedness of range when the operator projects the $L^p$ into $L^{q}$ when $1\leq q<p<\infty$.

\begin{theorem}
Suppose that $1\leq q< p<\infty$ and let $W$ be a bounded operator from $L^{p}(\mu)$ into $L^{q}(\mu)$. The followings hold.
\begin{enumerate}
\item[(a)] If  $W$  has closed range and is injective then the set $\{i\in\Bbb N~;~J(=\sum_{r=1}^{n}h_{r}E_{r}(|u_{r}|^{q})\circ \varphi_{r}^{-1})(A_{i})>0\}$ is finite.
\item[(b)] If $J(B)=0$, $\mu-a.e$~~  and the set $\{i\in\Bbb N~;~J(=\sum_{r=1}^{n}h_{r}E_{r}(|u_{r}|^{q})\circ \varphi_{r}^{-1})(A_{i})>0\}$ is finite then $W$  has closed range.
\end{enumerate}
\end{theorem}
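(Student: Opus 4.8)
The plan is to transfer everything to the scalar symbol $J$ through the integral identity $\norm{Wf}_q^q=\int_X J\abs{f}^q\,d\mu$ and then read off the behaviour of $W$ on functions supported on the atoms. The convexity estimate $\abs{Wf}^q\le n^{q-1}\sum_r\abs{u_r}^q\abs{f\circ\varphi_r}^q$ together with the change–of–variable formula for each $u_rC_{\varphi_r}$ gives the upper bound $\norm{Wf}_q^q\le n^{q-1}\int_X J\abs{f}^q\,d\mu$, while the standing support hypothesis on the $u_i,\varphi_i$ removes the cross terms and yields the matching lower bound $\norm{Wf}_q^q\ge c\int_X J\abs{f}^q\,d\mu$. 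I then use the decomposition $X=\bigl(\bigcup_i A_i\bigr)\cup B$ into atoms $A_i$ and non-atomic part $B$, write $f(A_i)$ for the constant value of $f$ on $A_i$, and set $w_i=J(A_i)\,\mu(A_i)^{(p-q)/p}$, so that on atom-supported $f$ the identity reads $\norm{Wf}_q^q\asymp\sum_i w_i\abs{b_i}^q$ with $b_i=f(A_i)\mu(A_i)^{1/p}$ and $\norm{f}_p^q=\bigl(\sum_i\abs{b_i}^p\bigr)^{q/p}$.

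For (a) I use that an injective operator with closed range is bounded below: there is $d>0$ with $\norm{Wf}_q\ge d\norm{f}_p$ for all $f$. Testing on a single atom $f=\chi_{A_i}$ and comparing this with the convexity upper bound $\norm{W\chi_{A_i}}_q^q\le n^{q-1}J(A_i)\mu(A_i)$ gives $w_i\ge d^{\,q}/n^{q-1}=:\kappa>0$ for every atom (in particular every atom already has $J(A_i)>0$). For the opposing estimate I combine the lower bound $\norm{Wf}_q^q\ge c\int_X J\abs{f}^q\,d\mu$ with boundedness of $W$: applied to arbitrary atom-supported $f$ this yields $c\sum_i w_i\abs{b_i}^q\le\norm{W}^q\bigl(\sum_i\abs{b_i}^p\bigr)^{q/p}$ for every $(b_i)\in\ell^p$. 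Writing $a_i=\abs{b_i}^q$ and taking the supremum over $\norm{a}_{\ell^{p/q}}\le1$ identifies the left side with the $\ell^{p/(p-q)}$-norm of $(w_i)$, whence $\sum_i w_i^{\,p/(p-q)}<\infty$. Since each term is at least $\kappa^{p/(p-q)}$, the series has only finitely many terms, so $\{i:J(A_i)>0\}$ is finite.

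For (b) I argue that the hypotheses force $W$ to factor through a finite-dimensional space. Because $J=0$ a.e. on $B$ and $J(A_i)=0$ for $i\notin F:=\{i:J(A_i)>0\}$, the symbol $J$ is supported on the finite union $\bigcup_{i\in F}A_i$, so $\norm{Wf}_q^q\le n^{q-1}\sum_{i\in F}J(A_i)\abs{f(A_i)}^q\mu(A_i)$. In particular $Wf$ depends only on the finite vector $Pf:=(f(A_i))_{i\in F}\in\Complex^{F}$: if $Pf=Pf'$ then $W(f-f')=0$. Hence $W=\widetilde W\circ P$ with $P:L^p(\mu)\to\Complex^{F}$ bounded and surjective and $\widetilde W:\Complex^{F}\to L^q(\mu)$ linear, so that $W(L^p(\mu))=\widetilde W(\Complex^{F})$ is a finite-dimensional, hence closed, subspace of $L^q(\mu)$.

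The main obstacle is the lower bound $\norm{Wf}_q^q\ge c\int_X J\abs{f}^q\,d\mu$ used in (a): convexity only furnishes the upper bound, and the reverse can fail because of cancellation between the summands $u_r(f\circ\varphi_r)$ unless the operators $C_{\varphi_r}$ act on disjointly supported pieces — this is exactly what the condition of the type $u_i(\varphi_j^{-1})=0$ for $i\neq j$ guarantees. Once the full identity is available, the two scale-sensitive inequalities $w_i\ge\kappa$ and $\sum_i w_i^{\,p/(p-q)}<\infty$ pull against each other precisely because $q<p$, and it is this gap between the exponents that makes an infinite atomic part incompatible with being bounded below. Part (b), by contrast, uses only the upper estimate and is comparatively routine.
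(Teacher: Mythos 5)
In part (a) your opening move is exactly the paper's: test the lower bound $\|Wf\|_q\ge d\|f\|_p$ against a single atom and compare with the convexity estimate $\|W\chi_{A_i}\|_q^q\le n^{q-1}J(A_i)\mu(A_i)$ to get $J(A_i)^{p/(p-q)}\mu(A_i)\ge\kappa^{p/(p-q)}$ for every $i$ (your $w_i\ge\kappa$ is the same inequality after raising to the power $p/(p-q)$). The divergence is in how the opposing summability statement is obtained. The paper simply writes $\sum_i J(A_i)^{p/(p-q)}\mu(A_i)<\infty$ with no justification and derives the contradiction; you try to actually prove this from boundedness of $W$ via $\ell^{p/q}$--$\ell^{p/(p-q)}$ duality, but your derivation rests on the reverse estimate $\|Wf\|_q^q\ge c\int_X J|f|^q\,d\mu$, which, as you yourself concede, fails in general (take $u_1=-u_2$, $\varphi_1=\varphi_2$: then $W=0$ while $J>0$) and is only available under a disjoint-support hypothesis of the form $u_i(\varphi_j^{-1})=0$ for $i\neq j$. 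That hypothesis appears in Theorems 2.1, 2.8 and 2.9 of the paper but \emph{not} in this one, so as written your proof of (a) establishes the statement only under an added assumption. You have, however, correctly isolated the one step the paper glosses over: under the hypotheses as stated, neither your argument nor the paper's actually closes this gap, and some no-cancellation or positivity assumption on the $u_i$ (under which each $J_i=h_iE_i(|u_i|^q)\circ\varphi_i^{-1}$ lies in $L^{p/(p-q)}$ by the single-operator boundedness criterion) seems to be needed.

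Your part (b) is correct and takes a genuinely different, and cleaner, route than the paper. The paper picks $g$ in the closure of the range, takes $Wf_n\to g$ with the unjustified normalization $\|f_n\|_p<1$, runs a Bolzano--Weierstrass/diagonal extraction on the atom values $f_n(A_i)$ for the finitely many relevant atoms, and checks $g=Wf$ for the resulting simple function using the same convexity upper bound you use. You instead observe that since $J$ vanishes off the finite union $\bigcup_{i\in F}A_i$, the estimate $\|W(f-f')\|_q^q\le n^{q-1}\int_X J|f-f'|^q\,d\mu$ shows $Wf$ depends only on the finite vector $(f(A_i))_{i\in F}$, so $W$ factors through $\Complex^{F}$ and its range is a finite-dimensional, hence closed, subspace of $L^q(\mu)$. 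This uses only the (unconditionally valid) upper bound, dispenses with sequences entirely, and sidesteps the normalization issue in the paper's argument; it is the preferable proof of (b).
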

\begin{proof}
$(a)$ Suppose on the contrary, the set $\{i\in\Bbb N~;~J(A_{i})>0\}$ is infinite. Since $W$  is injective and has closed range there exists $d>0$ such that $\|Wf\|_{q}\geq d\|f\|_{p}$ for all $f\in L^{p}(\mu)$. Thus for any $i\in \Bbb N$, $\|W\chi_{A_{i}}\|_{q}^{q}\geq d^{q}\mu(A_{i})^{\frac{q}{p}}$ and so
\begin{eqnarray}
\nonumber d^{q}\mu(A_{i})^{\frac{q}{p}}&\leq & \|W\chi_{A_{i}}\|_{q}^{q}\\
\nonumber & = &\int_{X}|\sum_{r=1}^{n}u_{r}\chi_{A_{i}}\circ \varphi_{r}|^{q}d\mu\\
\nonumber &\leq & n^{q-1}\int_{X}J\chi_{A_{i}}d\mu\\
\nonumber &= &n^{q-1}J(A_{i})\mu(A_{i}).\\
 \nonumber
\end{eqnarray}
It follows from the preceding inequality that
\begin{eqnarray}
\nonumber \frac{d^{\frac{pq}{p-q}}}{n^{\frac{p(q-1)}{p-q}}}\leq J(A_{i})^{\frac{p}{p-q}}\mu(A_{i}).\\
 \nonumber
\end{eqnarray}
Therefore,
\begin{eqnarray}
\nonumber \infty=\sum_{i\in\Bbb N}\frac{d^{\frac{pq}{p-q}}}{n^{\frac{p(q-1)}{p-q}}}\leq \sum_{i\in\Bbb N}J(A_{i})^{\frac{p}{p-q}}\mu(A_{i})<\infty.\\
 \nonumber
\end{eqnarray}
This is a contradiction.\\
$(b)$ Let $g\in \overline{W(L^{p}(\mu))}$ then there exists a sequence $(Wf_{n})_{n\in\Bbb N}\subseteq W(L^{p}(\mu))$ such that $Wf_{n}\longrightarrow g$ and $\|f_{n}\|<1$. If the set $\{i\in\Bbb N~;~J(A_{i})>0\}$ is empty then $W$ is the zero operator . Otherwise we may assume there exists  some $k\in \Bbb N$ such that $J(A_{i})>0$ for $1\leq i \leq k$ and $J(A_{i})=0$ for any $i>k$. As $f_{n}\in L^{p}(\mu)$ for all $n$, $|f_{n}(A_{i})|\leq\frac{\|f_{n}\|_{p}}{\sqrt[p]{\mu(A_{i})}}\leq\frac{1}{\sqrt[p]{\mu(A_{i})}}$ for any $1\leq i\leq k$ and any $n\in \Bbb N$. By Bolzano-Weierstrass there exists a subsequence of nutural number $(n_{j})_{j\in \Bbb N}$ such that for each fixed $1\leq i\leq k$ the sequence $(f_{n_{j}}(A_{i}))_{j\in \Bbb N}$ converges. Suppose $\lim_{j\rightarrow \infty}f_{n_{j}}(A_{i})=\varsigma_{j}(\in \Bbb C)$ and define $f=\sum_{i=1}^{k}\varsigma_{j}\chi_{A_{i}}$. Then $f\in L^{p}(\mu)$. For every $\epsilon>0$  we have that
\begin{eqnarray}
\nonumber \|g-Wf\|_{q}&\leq& \|g-Wf_{n}\|_{q}+\|Wf_{n}-Wf_{n_{j}}\|_{q}+\|Wf_{n_{j}}-Wf\|_{q}\\
 \nonumber &\leq& \frac{\epsilon}{3}+\frac{\epsilon}{3}+\int_{X}|W(f_{n_{j}}-f)|^{q}d\mu\\
  \nonumber &\leq& \frac{\epsilon}{3}+\frac{\epsilon}{3}+n^{q-1}\int_{X}J|f_{n_{j}}-f|^{q}d\mu\\
  \nonumber &= &\frac{\epsilon}{3}+\frac{\epsilon}{3}+n^{q-1}\sum_{i=1}^{k}J(A_{i})|f_{n_{j}}(A_{i})-\varsigma_{j}|^{q}\mu(A_{i})\\
 \nonumber &\longrightarrow&  0\\
   \nonumber
\end{eqnarray}
\end{proof}
In the next theorem we obtain some necessary and sufficient conditions for closedness of range when the operator projects the $L^p$ into $L^{q}$ when $1\leq p<q<\infty$.
\begin{theorem}
Suppose that $1\leq p< q<\infty$ and  $W=\sum_{i=1}^{m}u_{i}C_{\varphi_{i}}$ be a bounded operator from $L^{p}(\mu)$ into $L^{q}(\mu)$.   Then the  followings hold.
\begin{enumerate}
\item[(a)]  If $J(B)=0, ~~\mu -a.e$  and $\sum_{i\in \Bbb N}J(A_{i})\mu(A_{i})<\infty$  then $W$ has closed range.
\item[(b)] If  $W$  has closed range and is injective then $J(B)=0, ~~\mu -a.e$ .
\item[(c)] Let $\mu(X)<\infty$. If  $W$  has closed range and is injective then  there exists a constant $\delta>0$ such that $u=\sum_{i=1}^{n}u_{i}^{p}\geq \delta$ on $X$.
\end{enumerate}
\end{theorem}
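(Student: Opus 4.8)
The plan is to follow the template of Theorem 2, whose three assertions are identical, adapting the exponents from the equal scale $(p,p)$ to the mixed scale $(p,q)$ with $1\le p<q<\infty$. The workhorse is the pointwise convexity estimate $\abs{Wf}^{q}\le m^{q-1}\sum_{i=1}^{m}\abs{u_i}^{q}\,\abs{f\circ\varphi_i}^{q}$ combined with the change-of-variables and conditional-expectation identity $\int_X\abs{u_i}^{q}\abs{f\circ\varphi_i}^{q}\,d\mu=\int_X h_iE_i(\abs{u_i}^{q})\circ\varphi_i^{-1}\,\abs{f}^{q}\,d\mu$, which together give the master bound $\norm{Wf}_q^{q}\le m^{q-1}\int_X J\abs{f}^{q}\,d\mu$, where $J=\sum_{r=1}^{m}h_rE_r(\abs{u_r}^{q})\circ\varphi_r^{-1}$. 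Since $J(B)=0$ a.e. in parts (a) and (b), only the values of $f$ on the atoms $A_i$ feed into $Wf$. For (a) I would take $(Wf_n)$ with $\norm{f_n}_p<1$ and $Wf_n\to g$ in $L^q$; on $A_i$ one has $\abs{f_n(A_i)}\le\mu(A_i)^{-1/p}$, so a diagonal Bolzano--Weierstrass extraction yields a subsequence with $f_{n_k}(A_i)\to\alpha_i$ for every $i$. Putting $f=\sum_i\alpha_i\chi_{A_i}$, Fatou's lemma gives $f\in L^p(\mu)$, and it remains to show the defect series $\sum_i J(A_i)\abs{f_{n_k}(A_i)-\alpha_i}^{q}\mu(A_i)$ tends to $0$; with $Wf_n\to g$ this forces $g=Wf$, so the range is closed.

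For (b) I would argue by contradiction. If $\mu(\set{x\in B:J(x)>0})>0$, choose $\delta>0$ and a non-atomic $G\subseteq B$ of positive (finite) measure on which $J\ge\delta$. Regarding $L^p_{\mid G}(\mu)$ as a subspace of $L^p(\mu)$ and using that $W$ has closed range, one first verifies that $W_{\mid G}(L^p_{\mid G}(\mu))$ is closed; together with injectivity this produces a lower bound $\norm{W_{\mid G}f}_q\ge c\norm{f}_p$ valid for all $f$ supported in $G$. I would then slice $G$ into level bands on which $J$ is essentially constant, fix one such band $G_N$ of positive measure, and exploit non-atomicity to select sets $E\subseteq G_N$ for which the master bound forces $\norm{W_{\mid G}\chi_E}_q$ to be too small compared with $c\norm{\chi_E}_p$, contradicting the lower bound and so forcing $J=0$ a.e. on $B$.

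For (c), assume $\mu(X)<\infty$; closed range plus injectivity give a global lower bound $\norm{Wf}_q\ge d\norm{f}_p$. Testing on $f=\chi_X$ and expanding $\abs{W\chi_X}^{q}\le n^{q-1}\sum_{i=1}^{n}u_i^{q}\chi_{\varphi_i^{-1}(X)}$, one obtains the chain $n^{q-1}\int_X\sum_{i=1}^{n}u_i^{q}\,d\mu\ge\norm{W\chi_X}_q^{q}\ge d^{q}\mu(X)^{q/p}$, from which the claimed uniform positivity $u=\sum_{i}u_i^{p}\ge\delta$ on $X$ follows exactly as in Theorem 2(c).

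The decisive new difficulty, relative to the equal-exponent Theorem 2, is the tension between the domain exponent $p$ and the range exponent $q$. In (a) the individual defect term is only controlled by $J(A_i)\mu(A_i)^{1-q/p}$, since $\abs{f_{n_k}(A_i)-\alpha_i}^{q}\le 2^{q}\mu(A_i)^{-q/p}$, and as $q>p$ this carries a negative power of $\mu(A_i)$; dominating the defect series by the hypothesis $\sum_i J(A_i)\mu(A_i)<\infty$, or else splitting into finitely many heavy atoms plus a uniformly small tail, is the step I expect to demand the most care. Symmetrically, in (b) the test-function construction works precisely because $p<q$ renders the ratio $\norm{\chi_E}_q/\norm{\chi_E}_p=\mu(E)^{1/q-1/p}$ exploitable, and arranging this ratio to defeat the constant $c$ on the non-atomic set $G$ is the genuine crux of the argument.
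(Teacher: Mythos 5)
Your architecture coincides with the paper's in all three parts: part (a) is the same Cantor diagonal extraction on the atoms followed by Fatou and the estimate $\norm{Wf}_q^q\le m^{q-1}\int_X J\abs{f}^q\,d\mu$, and part (c) is the same test on $\chi_X$ (the paper in fact copies the $L^p\to L^p$ computation verbatim with $p$-norms throughout, so your $q$-norm version is, if anything, the more appropriate one for the mixed setting). The genuine problem is the step you yourself single out as the crux of part (b). You say the construction works ``precisely because $p<q$ renders the ratio $\norm{\chi_E}_q/\norm{\chi_E}_p=\mu(E)^{1/q-1/p}$ exploitable'' and you plan to use non-atomicity to select $E\subseteq G_N$ making $\norm{W_{\mid G}\chi_E}_q$ too small against $c\norm{\chi_E}_p$. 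But on the band $G_N$ the master bound gives $\norm{W_{\mid G}\chi_E}_q\le m^{(q-1)/q}N^{1/q}\mu(E)^{1/q}$, so the controlled quotient is $N^{1/q}\mu(E)^{1/q-1/p}$ with a \emph{negative} exponent: shrinking $E$ makes this upper bound blow up, not collapse. The ``take a tiny subset of $G_N$'' device is exactly what works in the companion theorem for $1\le q<p$, where $1/q-1/p>0$; for $p<q$ it points the wrong way, and since $E\subseteq G\,$ with $\mu(G)<\infty$ you cannot go to the opposite extreme and let $\mu(E)\to\infty$ either. As sketched, your part (b) does not close.

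For what it is worth, the paper's own proof of (b) recognizes this and goes the other way: it fixes $\mu(E_\alpha)=\mu(G_N)/K$ with $K$ capped above in terms of $\alpha$, i.e.\ it keeps the test sets \emph{large} (measure bounded below) and tries to win by pushing the parameter $\alpha$ up to get $\norm{W_{\mid G}\chi_{E_\alpha}}_q<\frac{N}{\alpha}\norm{\chi_{E_\alpha}}_p$. Whether that is consistent is itself doubtful (one needs $K\ge1$ and $K<N^{q/(q-p)}\mu(G_N)/\alpha^{pq/(q-p)}$ simultaneously, which bounds $\alpha$ from above while the contradiction needs $\alpha>N/c$), but it is at least a different use of non-atomicity from the one you describe, and you would need to confront this inversion explicitly. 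A smaller gap, shared with the paper, sits in part (a): the term-by-term bound is $J(A_i)\abs{f_{n_k}(A_i)-\alpha_i}^q\mu(A_i)\le 2^qJ(A_i)\mu(A_i)^{1-q/p}$ with $1-q/p<0$, so the hypothesis $\sum_iJ(A_i)\mu(A_i)<\infty$ does not by itself furnish a summable dominating series, and your proposed split into finitely many heavy atoms plus a tail still needs the tail $\sum_{i>M}J(A_i)\mu(A_i)^{1-q/p}$ to be small, which has not been established.
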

\begin{proof}
 $(a)$ Take any sequence $(Wf_{n})_{n \in \Bbb N}$ in $W(L^{p}(\mu))$ with $\|f_{n}\|<1$. For fixed $i\in \Bbb N$ the sequence $(f_{n}(A_{i}))_{n \in \Bbb N}$ is bounded by $\frac{1}{\sqrt[p]{\mu(A_{i})}}$. Applying contor's diagonalization procces, we extract a subsequence $(f_{n_{k}})_{k \in \Bbb N}$ such  that with each fixed $i$, $f_{n_{k}}(A_{i})\rightarrow \alpha_{i}$ for each  $\alpha_{i} \in \Bbb C$. Define $f= \sum_{i=1}^{\infty}\alpha_{i} \chi_{A_{i}}$. By fatous lemma we have $\int_{X}|f|^{p}d\mu \leq \lim\inf_{k\rightarrow \infty}\int_{X}|f_{n_{k}}|^{p}d\mu \leq 1 $, or $f \in L^{p}(\mu)$. Then we have
\begin{eqnarray}
\nonumber \|g-Wf\|_{q}&\leq& \|g-Wf_{n}\|_{q}+\|Wf_{n}-Wf_{n_{k}}\|_{q}+\|Wf_{n_{k}}-Wf\|_{q}\\
 \nonumber &\leq& \frac{\epsilon}{3}+\frac{\epsilon}{3}+\int_{X}|W(f_{n_{k}}-f)|^{q}d\mu\\
  \nonumber &\leq& \frac{\epsilon}{3}+\frac{\epsilon}{3}+m^{q-1}\int_{X}J|f_{n_{k}}-f|^{q}d\mu\\
  \nonumber &= &\frac{\epsilon}{3}+\frac{\epsilon}{3}+m^{q-1}\sum_{i=1}^{\infty}J(A_{i})|f_{n_{k}}(A_{i})-\alpha_{i}|^{q}\mu(A_{i})\\
 \nonumber &\longrightarrow&  0\\
   \nonumber
\end{eqnarray}
 Obviusly  $W(L^{p}(\mu))$ is closed in $L^{q}(\mu)$. \\
 $(b)$ Suppose on the contrary, $\mu(\{x\in B~:~J(x)>0\})>0$. Then there exists some $\delta>0$ such that the set $G=\{x\in B~:~J(x)\geq\delta\}$ has positive $\mu$- measure. We assume $\mu(G)<\infty$. Moreover, as $G$ is non atomic, we can further assume that $\mu(X\backslash G)>0$. Consider the Banach space $L^{p}_{\mid_{G}}(\mu)$ and the operator $W_{\mid_{G}}$ defined on $L^{p}_{\mid_{G}}(\mu)$. We claim that $W_{\mid_{G}}(L^{p}_{\mid_{G}}(\mu))$ is closed in $L^{q}(\mu)$. To prove we take  any convergent sequence $(W_{\mid_{G}}(f_{n}))_{n\in \Bbb N}$ in $W_{\mid_{G}}(L^{p}_{\mid_{G}}(\mu))$. Let $g\in L^{q}(\mu)$ satisfy
 $\|W_{\mid_{G}}(f_{n})-g\|_{q}\rightarrow 0$ as $n\rightarrow \infty$. Note that $(W_{\mid_{G}}(f_{n}))_{n\in \Bbb N}$ can be ragarded as a sequence in $W(L^{p}(\mu))$. The closedness of range of $W$ yeilds an $f\in L^{p}(\mu)$ with $g=Wf~~ \mu - a.e$ On $X$. Then assume $W$ has closed range and is injective so there exists a constant $d>0$ such that $\|W_{\mid_{G}}(f_{n})-Wf\|_{q}\geq d\|f_{n}-f\|_{p}$. As $\|W_{\mid_{ G}}(f_{n})-g\|_{q}=\|W_{\mid_{G}}(f_{n})-Wf\|_{q}=0$ and $\|f_{n}-f\|_{p}^{p}=\int_{G}|f_{n}-f|^{p}d\mu+\int_{X\backslash G}|f_{n}-f|^{p}d\mu$ we have that $\int_{X\backslash G}|f|^{p}d\mu=0$ and so $f\in L^{p}_{\mid_{G}}(\mu)$. Then there exists some conctant $c>0$ such that $\|W_{\mid_{G}}f\|_{q}\geq c\|f\|_{p}$ for all $f\in L^{p}_{\mid_{G}}(\mu)$. We claim that this is impossible by showing that for any $\alpha>0 $, there is some $f_{\alpha}\in L^{p}_{\mid_{G}}(\mu)$ satisfying $\|W_{\mid_{G}}f\|_{q}<c\|f\|_{p}$. For any $n\in \Bbb N$, define $G_{n}=\{x\in G~;~n-1\leq J(x)\leq n\}$. Then $G=\left(\cup_{n\in \Bbb N}G_{n}\right)\cup\{x\in G ~;~ J(x)=\infty\}$. Since $W$ is a bounded operator on $L^{p}(\mu)$ so $J$ is finite valued $\mu$-a.e on $X$, then we have $\mu(\{x\in G ~;~ J(x)=\infty\})=0$. Now as $\mu(G)>0$, $\mu(G_{N})>0$ for some $N\in \Bbb N$. Since $G_{N}$ is non- atomic, for any $\alpha>0$, we can choose some set $E_{\alpha}\in \Sigma$ such that $E_{\alpha}\subseteq G_{N}$ and $\mu(E_{\alpha})=\frac{\mu (G_{N})}{K}$, where $K< \frac{N^{\frac{q}{q-p}}\mu(G_{N}}{\alpha^{\frac{pq}{q-p}}}$. Take $f_{\alpha}=\chi_{E_{\alpha}}$. Obviously $f_{\alpha}\in L^{p}_{\mid_{G}}(\mu)$. Moreover
 \begin{eqnarray}
\nonumber
 \|W_{\mid_{G}}f_{\alpha}\|_{p}&\leq & m^{\frac{q-1}{q}}\left(\int_{X}J|f_{\alpha}|^{q}d\mu\right)^{\frac{1}{q}}\\
\nonumber & < & m^{\frac{q-1}{q}}\left(\frac{N\mu(G_{N}}{K}\right)^{\frac{1}{q}}\\
\nonumber &=& m^{\frac{q-1}{q}}N^{\frac{1}{q}}\left(\frac{\mu(G_{N}}{K}\right)^{\frac{1}{p}+\frac{p-q}{pq}}\\
\nonumber &=& m^{\frac{q-1}{q}}N^{\frac{1}{q}}\|f_{\alpha}\|_{p}\left(\frac{K}{\mu(G_{N}}\right)^{\frac{q-p}{pq}}\\
\nonumber &<& \frac{N}{\alpha}\|f_{\alpha}\|_{p}.
\nonumber
\end{eqnarray}
 This prove our  claim  and therefore we must have $J=0, ~~\mu -a.e$ on $B$.     \\
$(c)$ Assume $W$ has closed range and is injective so there exists a constant $d>0$ such that $\|Wf\|_{p}\geq d \|f\|_{p}$  for any $f\in L^{p}(\mu)$. 
\begin{eqnarray}
\nonumber n^{p-1}\int_{X}\sum_{i=1}^{n}u_{i}^{p} d\mu&\geq & n^{p-1}\sum_{i=1}^{n}\int_{\varphi_{i}^{-1}(X)}u_{i}^{p}d\mu\\
\nonumber & = &n^{p-1}\sum_{i=1}^{n}\int_{X}u_{i}^{p}\chi_{\varphi_{i}^{-1}(X)}d\mu\\
\nonumber &\geq &\int_{X}|W\chi_{X}|^{p}d\mu\\
\nonumber &= &\|W\chi_{X}\|_{p}^{p}\\
 \nonumber &\geq & d^{p}\|\chi_{X}\|_{p}^{p}=d^{p}\mu(X)\\
\nonumber
\end{eqnarray}
 so $u \geq \delta$ on $X$ . The proof is now complete.
\end{proof}


In the sequel we investigate the closedness of range the operator in from $L^{\infty}$ into $L^q$ and the converse. First we find some necessary and sufficient conditions for the case that $W$ is a bounded operator from $L^{\infty}$ into $L^q$ with $1<q<\infty$.
\begin{theorem}
Suppose that $1\leq q<\infty$. Let $J=\sum_{r=1}^{n}h_{r}E_{r}(|u_{r}|^{q})\circ \varphi_{r}^{-1}$ and $W$ be a operator from $L^{\infty}(\mu)$ into $L^{q}(\mu)$. The followings hold.
\begin{enumerate}
\item[(a)]
If
\begin{enumerate}
\item[(1)] $W$  has closed range.
\item[(2)]  $W$ is injective.
\item[(3)]  $\sum_{i\in\Bbb N}J(A_{i})\mu(A_{i})<\infty.$
\end{enumerate}

  Then the set $\{i\in\Bbb N~;~J(A_{i})>0\}$ is finite.
\item[(b)] If $J(B)=0$, $\mu-a.e$~~  and the set $\{i\in\Bbb N~;~J(A_{i})>0\}$ is finite then $W$  has closed range.
\end{enumerate}
\end{theorem}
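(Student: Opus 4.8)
The plan is to reduce everything to the behaviour of $W$ on the atoms of $(X,\Sigma,\mu)$, exploiting the inequality
$$\|Wf\|_{q}^{q}\leq n^{q-1}\int_{X}J|f|^{q}\,d\mu,$$
which was established in the course of the previous proofs, together with the standard fact that a bounded injective operator between Banach spaces has closed range if and only if it is bounded below. Recall that $X$ splits into its non-atomic part $B$ and its atoms $\{A_{i}\}_{i\in\mathbb{N}}$, and that for $f\in L^{\infty}(\mu)$ the value $f(A_{i})$ of $f$ on the atom $A_{i}$ is well defined.

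For part $(a)$, I would first note that since $W$ is injective with closed range, the open mapping theorem yields a constant $d>0$ with $\|Wf\|_{q}\geq d\|f\|_{\infty}$ for every $f\in L^{\infty}(\mu)$. Applying this to $f=\chi_{A_{i}}$, for which $\|\chi_{A_{i}}\|_{\infty}=1$, gives $\|W\chi_{A_{i}}\|_{q}^{q}\geq d^{q}$. On the other hand, the bound above with $f=\chi_{A_{i}}$ gives $\|W\chi_{A_{i}}\|_{q}^{q}\leq n^{q-1}\int_{X}J\chi_{A_{i}}\,d\mu=n^{q-1}J(A_{i})\mu(A_{i})$. Combining the two estimates produces $J(A_{i})\mu(A_{i})\geq d^{q}/n^{q-1}$ for every atom $A_{i}$. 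Were the set $\{i\in\mathbb{N}\,;\,J(A_{i})>0\}$ infinite, the sum $\sum_{i}J(A_{i})\mu(A_{i})$ would dominate an infinite sum of the fixed positive quantity $d^{q}/n^{q-1}$ and hence diverge, contradicting hypothesis $(3)$. Thus the set is finite.

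For part $(b)$, I would show that $W$ factors through a finite-dimensional space. Write the finite set $\{i\,;\,J(A_{i})>0\}$ as $\{1,\dots,k\}$. If $f,f'\in L^{\infty}(\mu)$ agree on $A_{1},\dots,A_{k}$, then $f-f'$ vanishes there, so
$$\int_{X}J|f-f'|^{q}\,d\mu=\int_{B}J|f-f'|^{q}\,d\mu+\sum_{i}J(A_{i})|f(A_{i})-f'(A_{i})|^{q}\mu(A_{i})=0,$$
since $J=0$ on $B$ by hypothesis and each surviving atomic term is killed either by $J(A_{i})=0$ (for $i>k$) or by $f(A_{i})=f'(A_{i})$ (for $i\leq k$). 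The inequality above then forces $W(f-f')=0$. Consequently $Wf$ depends only on the vector $(f(A_{1}),\dots,f(A_{k}))\in\mathbb{C}^{k}$; choosing the representative $\sum_{i=1}^{k}f(A_{i})\chi_{A_{i}}$ gives $Wf=\sum_{i=1}^{k}f(A_{i})\,W\chi_{A_{i}}$, whence $W(L^{\infty}(\mu))=\operatorname{span}\{W\chi_{A_{1}},\dots,W\chi_{A_{k}}\}$, a finite-dimensional and therefore closed subspace of $L^{q}(\mu)$.

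The only genuinely delicate point is the reduction carried out in part $(b)$: one must verify that neither the non-atomic part $B$ nor the atoms on which $J$ vanishes contribute to the range. This is exactly where the hypothesis $J(B)=0$ $\mu$-a.e. and the upper bound $\|W\,\cdot\,\|_{q}^{q}\leq n^{q-1}\int_{X}J|\,\cdot\,|^{q}\,d\mu$ enter; once the reduction to the finitely many atoms $A_{1},\dots,A_{k}$ is justified, closedness is immediate from finite-dimensionality. In part $(a)$ the main thing to watch is that hypothesis $(3)$ is indispensable, since bounded-belowness alone only produces a uniform positive lower bound on $J(A_{i})\mu(A_{i})$ and not the finiteness of the atom set.
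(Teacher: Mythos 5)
Your proposal is correct. Part (a) is essentially the paper's argument: bounded-belowness from closed range plus injectivity, tested on $\chi_{A_{i}}$ against the upper bound $\|W\chi_{A_{i}}\|_{q}^{q}\leq n^{q-1}J(A_{i})\mu(A_{i})$, yielding the uniform lower bound $J(A_{i})\mu(A_{i})\geq d^{q}/n^{q-1}$ that contradicts hypothesis (3) if infinitely many atoms survive. Part (b), however, takes a genuinely different route. The paper picks $g$ in the closure of the range, chooses preimages $f_{n}$ with $\|f_{n}\|<1$, extracts by Bolzano--Weierstrass a subsequence along which the finitely many atom values $f_{n_{j}}(A_{i})$, $1\leq i\leq k$, converge, and then verifies that $g=W\bigl(\sum_{i=1}^{k}\varsigma_{i}\chi_{A_{i}}\bigr)$ via the same estimate $\|Wh\|_{q}^{q}\leq n^{q-1}\int_{X}J|h|^{q}d\mu$. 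You instead observe directly that this estimate, together with $J=0$ on $B$ and on the atoms with index $i>k$, forces $Wf=W\bigl(\sum_{i=1}^{k}f(A_{i})\chi_{A_{i}}\bigr)$ for every $f$, so the range is exactly $\operatorname{span}\{W\chi_{A_{1}},\dots,W\chi_{A_{k}}\}$, a finite-dimensional and hence closed subspace. Your version is cleaner and, notably, avoids the unjustified normalization $\|f_{n}\|<1$ that the paper imposes on an arbitrary sequence of preimages (a priori one only knows $Wf_{n}\to g$, not that the $f_{n}$ can be chosen bounded); since you never need a convergent sequence at all, that issue simply does not arise. The trade-off is that your argument reveals the range to be finite-dimensional, a strictly stronger conclusion than closedness, whereas the paper's template is the one it reuses verbatim in the settings where the surviving atom set is infinite and finite-dimensionality is unavailable.
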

\begin{proof}
$(a)$ Suppose on the contray, the set $\{i\in\Bbb N~;~J(A_{i})>0\}$ is infinite. Since $W$  has closed range and is injective we can find some constant$d>0$ such that $\|Wf\|_{q}\geq d\|f\|_{\infty}$ for all $f\in L^{\infty}(\mu)$. Thus for any $i\in \Bbb N$, $\|W\chi_{A_{i}}\|_{q}^{q}\geq d^{q}$ and so we have
\begin{eqnarray}
\nonumber d^{q}&\leq & \|W\chi_{A_{i}}\|_{q}^{q}\\
\nonumber & = &\int_{X}|\sum_{r=1}^{n}u_{r}\chi_{A_{i}}\circ \varphi_{r}|^{q}d\mu\\
\nonumber &\leq & n^{q-1}\int_{X}J\chi_{A_{i}}d\mu\\
\nonumber &= &n^{q-1}J(A_{i})\mu(A_{i})\\
 \nonumber
\end{eqnarray}
It follows from the preceding inequality that
\begin{eqnarray}
\nonumber \frac{d^{q}}{n^{q-1}}\leq J(A_{i})\mu(A_{i})\\
 \nonumber
\end{eqnarray}
Therefore,
\begin{eqnarray}
\nonumber \infty=\sum_{i\in\Bbb N}\frac{d^{q}}{n^{q-1}}\leq \sum_{i\in\Bbb N}J(A_{i})\mu(A_{i})<\infty\\
 \nonumber
\end{eqnarray}
contradiction arises. \\
$(b)$ Let $g\in \overline{W(L^{\infty}(\mu))}$ then there exists a sequence $(Wf_{n})_{n\in\Bbb N}\subseteq W(L^{\infty}(\mu))$ such that$Wf_{n}\longrightarrow g$ with $\|f_{n}\|<1$. If the set $\{i\in\Bbb N~;~J(A_{i})>0\}$ is empty then $W$ is the zero operator . Otherwise we may assume there exists  some $k\in \Bbb N$ such that $J(A_{i})>0$ for $1\leq i \leq k$ and $J(A_{i})=0$ for any $i>k$. As $f_{n}\in L^{\infty}(\mu)$ for all $n$, $|f_{n}(A_{i})|\leq\|f_{n}\|_{\infty}$ for any $1\leq i\leq k$ and any $n\in \Bbb N$. By Bolzano-Weierstrass there exists a subsequence of nutural number $(n_{j})_{j\in \Bbb N}$ such thst for each fixed $1\leq i\leq k$ the sequence $(f_{n_{j}}(A_{i}))_{j\in \Bbb N}$ converjes. Suppose $\lim_{j\rightarrow \infty}f_{n_{j}}(A_{i})=\varsigma_{j}(\in \Bbb C)$ and define $f=\sum_{i=1}^{k}\varsigma_{j}\chi_{A_{i}}$. Then $f\in L^{\infty}(\mu)$. For every $\epsilon>0$  we have that
\begin{eqnarray}
\nonumber \|g-Wf\|_{q}&\leq& \|g-Wf_{n}\|_{q}+\|Wf_{n}-Wf_{n_{j}}\|_{q}+\|Wf_{n_{j}}-Wf\|_{q}\\
 \nonumber &\leq& \frac{\epsilon}{3}+\frac{\epsilon}{3}+\int_{X}|W(f_{n_{j}}-f)|^{q}d\mu\\
  \nonumber &\leq& \frac{\epsilon}{3}+\frac{\epsilon}{3}+n^{q-1}\int_{X}J|f_{n_{j}}-f|^{q}d\mu\\
  \nonumber &= &\frac{\epsilon}{3}+\frac{\epsilon}{3}+n^{q-1}\sum_{i=1}^{k}J(A_{i})|f_{n_{j}}(A_{i})-\varsigma_{j}|^{q}\mu(A_{i})\\
 \nonumber &\longrightarrow&  0\\
   \nonumber
\end{eqnarray}
\end{proof}
Now we find some necessary and sufficient conditions for the case that $W$ is a bounded operator from $L^p$ into $L^{\infty}$ with $1<p<\infty$.
\begin{theorem}
Let   $u_{i}$'s are nonnegative and $\mu(X)<\infty$. Suppose that $1\leq p<\infty$ and let $W$ be a operator from $L^{p}(\mu)$ into $L^{\infty}(\mu)$. The followings hold.
\begin{enumerate}
\item[(a)]  If $(X, \Sigma, \mu)$ be a purely atomic space and $W$ is  bounded operator   then $W$ has closed range.
\item[(b)] If  $W$  has closed range and is injective then  there exists a constant $\delta>0$ such that $u=\sum_{i=1}^{n}u_{i}^{p}\geq \delta$ on $X$.
\end{enumerate}
\end{theorem}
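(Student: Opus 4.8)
The plan is to prove the two parts by different means: part (a) by a diagonalization argument modeled on the earlier theorems, and part (b) by contradiction against the bounded–below estimate. Throughout I identify a function on a purely atomic $\sigma$-finite space with its sequence of (a.e.) constant values $f(A_i)$ on an enumeration $\{A_i\}$ of the atoms, so that $\norm{f}_p^p=\sum_i\abs{f(A_i)}^p\mu(A_i)$.

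For part (a), I would start from a sequence $Wf_n\to g$ in $L^\infty(\mu)$ with $\norm{f_n}_p<1$. The bound $\abs{f_n(A_i)}\le\norm{f_n}_p\,\mu(A_i)^{-1/p}$ keeps each coordinate sequence bounded, so Cantor diagonalization extracts a subsequence with $f_{n_k}(A_i)\to\alpha_i$ for every $i$; setting $f=\sum_i\alpha_i\chi_{A_i}$, Fatou's lemma gives $\norm{f}_p\le 1$, hence $f\in L^p(\mu)$. The decisive step is then to show $g=Wf$ without ever claiming that $Wf_{n_k}\to Wf$ uniformly over the atoms (which may genuinely fail). Instead I would evaluate atom by atom: since $W$ is bounded, for each fixed $j$ the map $h\mapsto (Wh)(A_j)$ is a continuous linear functional on $L^p(\mu)$, and $f_{n_k}\to f$ coordinatewise with $\norm{f_{n_k}}_p\le 1$ lets me pass the limit inside this functional (for $p>1$ this is weak convergence of $f_{n_k}$ to its coordinatewise limit; the interchange is the only point needing care), so $(Wf_{n_k})(A_j)\to(Wf)(A_j)$ for every $j$. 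On the other hand $L^\infty$-convergence forces $(Wf_n)(A_j)\to g(A_j)$ for every $j$. Comparing the two limits yields $g(A_j)=(Wf)(A_j)$ on every atom, i.e. $g=Wf$ $\mu$-a.e., so $g$ lies in the range and $W(L^p(\mu))$ is closed; note that injectivity is not needed here.

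For part (b), the routine starting point is that injectivity together with closed range makes $W$ bounded below, so there is $d>0$ with $\norm{Wf}_\infty\ge d\norm{f}_p$ for all $f\in L^p(\mu)$. I would then argue by contradiction, assuming $\operatorname{ess\,inf} u=0$, so that for every small $\delta>0$ the set $\set{x:u(x)<\delta}$ has positive measure. The tool is the pointwise Hölder estimate $\abs{Wf(x)}\le u(x)^{1/p}\big(\sum_{i=1}^n\abs{f(\varphi_i(x))}^{p'}\big)^{1/p'}$ (with the usual convention when $p=1$), which shows that $Wf$ is small wherever $u$ is small; the goal is to produce a normalized $f_\delta$ whose $Wf_\delta$ is small in $L^\infty$ while $\norm{f_\delta}_p$ stays bounded below, contradicting the lower bound once $\delta$ is taken small enough. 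I expect this construction to be the main obstacle, and it is exactly where the hypotheses $u_i\ge 0$ and $\mu(X)<\infty$ must be used: the $L^\infty$ norm is global whereas the smallness of $u$ is local, and the composition maps $\varphi_i$ decouple the point $x$ at which $u(x)$ is small from the point $\varphi_i(x)$ at which the test function is sampled, so the support of $f_\delta$ must be chosen compatibly with all the $\varphi_i$ at once. As a consistency check, in the purely atomic situation one can see directly that boundedness and bounded–belowness of an $L^p\to L^\infty$ operator can coexist only when the atoms carrying mass have measures bounded away from $0$, which forces $u\ge\delta$; the general finite-measure case should be reduced to, or argued in parallel with, this observation.
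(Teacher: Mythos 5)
There are genuine gaps in both parts, and they matter because they sit exactly where the statement is most fragile. In part (a), your weak-convergence patch of the last step is a real improvement over the naive claim that coordinatewise convergence of $f_{n_k}(A_i)$ forces $Wf_{n_k}\to Wf$ in norm (the paper does make that naive claim, writing $\sum_i J(A_i)\abs{f_{n_k}(A_i)-\alpha_i}^q\mu(A_i)\to 0$ with no justification), and for $p>1$ your atom-by-atom evaluation via the functionals $h\mapsto (Wh)(A_j)$ is correct. But two problems remain. First, $p=1$ is included in the statement and there bounded coordinatewise-convergent sequences need not converge weakly, so the interchange you flag really does fail. Second, and fatally, both you and the paper open with ``take $Wf_n\to g$ with $\norm{f_n}_p<1$.'' Without injectivity and bounded-belowness this is not a normalization: a point of $\overline{W(L^p(\mu))}$ need not be a limit of images of a \emph{bounded} sequence, so closing up $W(\text{unit ball})$ does not close up the range. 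In fact part (a) is false as stated: on $X=\mathbb{N}$ with $\mu(\{i\})=2^{-i}$, the single-term operator $W=M_u$ with $u(i)=2^{-i/p}/i\ge 0$ is bounded from $L^p(\mu)$ to $L^\infty(\mu)$, yet $g(i)=i^{-1-1/p}$ lies in the uniform closure of the range (truncate) but not in the range, because the preimages $f_k(i)=i^{-1/p}2^{i/p}\chi_{\{i\le k\}}$ have $\norm{f_k}_p^p=\sum_{i\le k}1/i\to\infty$. So no argument confined to bounded sequences can succeed, and some hypothesis is missing from the theorem.

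For part (b) you have a plan rather than a proof: you correctly identify that one must test $W$ against functions localized where $u$ is small, and correctly identify why this is hard (the $L^\infty$ norm of $Wf_\delta$ is a global supremum and the maps $\varphi_i$ decouple the point where $u$ is small from the points where $f_\delta$ is sampled), but the construction of $f_\delta$ is exactly the content of the claim and it is left open. For comparison, the paper's own proof of (b) is a two-line computation testing $W$ against $\chi_X$: since $W\chi_X=\sum_i u_i$, it chains $n^{p-1}\sum_i u_i^p\ge(\sum_i u_i)^p\ge\norm{W\chi_X}_\infty^p\ge d^p\mu(X)$. The middle inequality asserts that $(\sum_i u_i(x))^p$ dominates its own essential supremum at every $x$, which is backwards; the bounded-below estimate applied to $\chi_X$ only shows that $\sum_i u_i$ is large \emph{somewhere}, not everywhere. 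So the paper does not prove (b) either, and your instinct that a genuinely local test-function argument is required (and that the hypotheses $u_i\ge 0$, $\mu(X)<\infty$ alone may not suffice to carry it out) is sound; but as submitted, part (b) of your proposal is incomplete.
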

\begin{proof}
 $(a)$ Take any sequence $(Wf_{n})_{n \in \Bbb N}$ in $W(L^{p}(\mu))$ with $\|f_{n}\|<1$. For fixed $i\in \Bbb N$ the sequence $(f_{n}(A_{i}))_{n \in \Bbb N}$ is bounded by $\frac{1}{\sqrt[p]{\mu(A_{i})}}$. Applying contor's diagonalization procces, we extract a subsequence $(f_{n_{k}})_{k \in \Bbb N}$ such  that with each fixed $i$, $f_{n_{k}}(A_{i})\rightarrow \alpha_{i}$ for each  $\alpha_{i} \in \Bbb C$. Define $f= \sum_{i=1}^{\infty}\alpha_{i} \chi_{A_{i}}$. By fatous lemma we have $\int_{X}|f|^{p}d\mu \leq \lim\inf_{k\rightarrow \infty}\int_{X}|f_{n_{k}}|^{p}d\mu \leq 1 $, or $f \in L^{p}(\mu)$. Then we have
\begin{eqnarray}
\nonumber \|g-Wf\|_{\infty}&\leq& \|g-Wf_{n}\|_{\infty}+\|Wf_{n}-Wf_{n_{k}}\|_{\infty}+\|Wf_{n_{k}}-Wf\|_{\infty}\\
 \nonumber &\leq& \frac{\epsilon}{3}+\frac{\epsilon}{3}+\|W\|\int_{X}|f_{n_{k}}-f|^{q}d\mu\\
  \nonumber &\leq& \frac{\epsilon}{3}+\frac{\epsilon}{3}+\|W\|\int_{\cup_{i\in \Bbb N}A_{i}}|f_{n_{k}}-f|^{q}d\mu\\
  \nonumber &= &\frac{\epsilon}{3}+\frac{\epsilon}{3}+\|W\|\sum_{i=1}^{\infty}|f_{n_{k}}(A_{i})-\alpha_{i}|^{q}\mu(A_{i})\\
 \nonumber &\longrightarrow&  0\\
   \nonumber
\end{eqnarray}
 Obviusly  $W(L^{p}(\mu))$ is closed in $L^{\infty}(\mu)$. \\
 $(b)$ Assume $W$ has closed range and is injective so there exists a constant $d>0$ such that $\|Wf\|_{\infty}\geq d \|f\|_{p}$  for any $f\in L^{p}(\mu)$. Take $\delta=\frac{d^{p}\mu(X)}{n^{p-1}}$, Then,
\begin{eqnarray}
\nonumber |\sum_{i=1}^{n}u_{i}\chi_{X}\circ \varphi_{i}|^{p} &\leq & n^{p-1}\sum_{i=1}^{n}u_{i}^{p}\\
\nonumber
\end{eqnarray}
Therefore,
\begin{eqnarray}
\nonumber n^{p-1}\sum_{i=1}^{n}u_{i}^{p}&\geq& (\sum_{i=1}^{n}u_{i})^{p}\\
 \nonumber &\geq&\|W\chi_{X}\|_{\infty}^{p} \\
 \nonumber &\geq & d^{p}\|\chi_{X}\|_{p}^{p}=d^{p}\mu(X)\\
\nonumber
\end{eqnarray}
 so $u \geq \delta$ on $X$ . The proof is now complete.
\end{proof}
Here we consider $W$ as a bounded operator on $L^{\infty}$.
\begin{theorem}
Let   $u_{i}$'s are nonnegative and $\mu(X)<\infty$. Suppose that  $W$ be a bounded operator from $L^{\infty}(\mu)$ into $L^{\infty}(\mu)$. The followings hold.
\begin{enumerate}
\item[(a)]  If If $(X, \Sigma, \mu)$ be a purely atomic space then $W$ has closed range.
\item[(b)] If  $W$  has closed range and is injective then  there exists a constant $\delta>0$ such that $u=\sum_{i=1}^{n}u_{i}\geq \delta$ on $X$.
\end{enumerate}
\end{theorem}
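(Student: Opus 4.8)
The plan is to follow the template of the preceding $L^{p}\to L^{\infty}$ theorem, adapting each step to the case where both the domain and the range carry the $\|\cdot\|_{\infty}$ norm. For (a), since $(X,\Sigma,\mu)$ is purely atomic with $\mu(X)<\infty$, I would write $X=\bigcup_{i}A_{i}$ as a countable disjoint union of atoms and take an arbitrary sequence $(Wf_{n})$ in $W(L^{\infty}(\mu))$ converging to some $g$; after normalizing we may assume $\|f_{n}\|_{\infty}\leq 1$, so that every numerical sequence $(f_{n}(A_{i}))_{n}$ is bounded by $1$. A Cantor diagonalization then extracts a subsequence $(f_{n_{k}})$ with $f_{n_{k}}(A_{i})\to\alpha_{i}$ for each fixed $i$, and since $|\alpha_{i}|\leq 1$ the function $f=\sum_{i}\alpha_{i}\chi_{A_{i}}$ lies in $L^{\infty}(\mu)$ with $\|f\|_{\infty}\leq 1$. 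It then remains to show $Wf_{n_{k}}\to Wf$, which forces $g=Wf$ and closes the range.

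The hard part will be exactly this last convergence. On a purely atomic space each $u_{r}$ is constant on every atom and each $\varphi_{r}$ carries an atom (a.e.) into a single atom, so $W(f_{n_{k}}-f)$ is again constant on atoms and $\|W(f_{n_{k}}-f)\|_{\infty}=\sup_{j}\bigl|\sum_{r}u_{r}(A_{j})\,(f_{n_{k}}-f)(\varphi_{r}(A_{j}))\bigr|$. For each fixed $j$ this tends to $0$, but, unlike the $L^{p}\to L^{\infty}$ argument in which $\|W(\cdot)\|_{\infty}$ was dominated by the weighted sum $\|W\|\sum_{i}|f_{n_{k}}(A_{i})-\alpha_{i}|^{q}\mu(A_{i})$ and the summable weights $\mu(A_{i})$ let dominated convergence finish the job, here the supremum over atoms cannot be controlled by an integral. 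The genuine obstacle is therefore a uniform (tail) estimate: one must split the supremum into the finitely many atoms on which pointwise convergence already suffices and a tail, and use the boundedness of $W$ together with $\sum_{i}\mu(A_{i})=\mu(X)<\infty$ to make the tail contribution small uniformly in $k$.

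For (b), closed range together with injectivity makes $W$ bounded below, so there is $d>0$ with $\|Wf\|_{\infty}\geq d\|f\|_{\infty}$ for every $f\in L^{\infty}(\mu)$. I would test this first on $\chi_{X}$: since $\chi_{X}\circ\varphi_{i}=\chi_{X}$ we get $W\chi_{X}=\sum_{i=1}^{n}u_{i}=u$, whence $\|u\|_{\infty}\geq d$. To promote this to the pointwise statement $u\geq\delta$ $\mu$-a.e., I would argue by contradiction, feeding the indicator $\chi_{E}$ of $E=\{x:u(x)<\delta\}$ into the lower bound and exploiting $u_{i}\geq 0$ together with the pointwise inequality $0\leq W\chi_{E}\leq W\chi_{X}=u$. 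The main obstacle here is precisely converting the global essential-supremum bound $\|u\|_{\infty}\geq d$ into an a.e. pointwise bound: because the maps $\varphi_{i}$ move the support of $\chi_{E}$ away from $E$, the value $\|W\chi_{E}\|_{\infty}$ may be attained off $E$ and need not force $u$ to be large on $E$ itself. Unless one can localize (as happens, for instance, when the $\varphi_{i}$ are near the identity so that $W\chi_{E}$ is essentially supported on $E$, in which case the multiplication-operator computation gives $u\geq d$ directly), this localization step is the crux, and it is where I would expect to invest the real effort, taking $\delta$ comparable to $d$.
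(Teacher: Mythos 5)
Your skeleton for part (a) is the same as the paper's (Cantor diagonalization over the atoms, pass to $f=\sum_i\alpha_i\chi_{A_i}$, then try to show $Wf_{n_k}\to Wf$), and you have correctly located the point where the argument is in danger: the paper simply writes $\|Wf_{n_k}-Wf\|_\infty\le\|W\|\sup_i|f_{n_k}(A_i)-\alpha_i|\to 0$, which does not follow from coordinatewise convergence when there are infinitely many atoms. However, your proposed repair --- splitting the supremum into finitely many atoms plus a tail controlled via $\sum_i\mu(A_i)=\mu(X)<\infty$ --- cannot work: the $L^\infty$ norm is blind to the measures of the atoms, so smallness of $\mu(A_i)$ gives no control over the tail of the supremum. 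Indeed no repair is possible, because (a) as stated admits a counterexample: take $X=\mathbb{N}$ with $\mu(\{i\})=2^{-i}$, $n=1$, $\varphi_1=\mathrm{id}$ and $u_1(i)=1/i$; then $W=M_{u_1}$ is a bounded nonnegative weighted composition operator on $L^\infty(\mu)=\ell^\infty$, yet $(1/\sqrt{i})_i$ lies in the closure of the range and not in the range. Note also that both you and the paper begin by assuming $\|f_n\|_\infty\le 1$; this normalization is not available (rescaling $f_n$ rescales $Wf_n$ and destroys convergence to $g$), and in the counterexample every preimage sequence of $(1/\sqrt{i})_i$ is necessarily unbounded, which is exactly how closedness fails.

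For part (b) you follow the paper up to $\|W\chi_X\|_\infty=\|u\|_\infty\ge d$, and you are right that the entire difficulty is promoting this essential-supremum bound to the pointwise bound $u\ge\delta$ $\mu$-a.e.; but you leave that step open, so the proposal is incomplete. The paper does not close it either: it writes $\sum_{i=1}^n u_i\ge\sum_{i=1}^n u_i\chi_X\circ\varphi_i\ge d$ ``on $X$'', but since $\chi_X\circ\varphi_i\equiv 1$ the middle term equals $u$ itself, so the displayed chain merely restates the desired conclusion rather than deriving it from the lower bound $\|Wf\|_\infty\ge d\|f\|_\infty$. In short, your proposal is honest about where both arguments break, but it fills neither gap; the paper's own proof asserts its way past exactly the two steps you flagged.
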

\begin{proof}
 $(a)$ Take any sequence $(Wf_{n})_{n \in \Bbb N}$ in $W(L^{\infty}(\mu))$ with $\|f_{n}\|<1$. For fixed $i\in \Bbb N$ the sequence $(f_{n}(A_{i}))_{n \in \Bbb N}$ is bounded by $|f_{n}(A_{i})|\leq \|f_{n}\|<1$. Applying contor's diagonalization procces, we extract a subsequence $(f_{n_{k}})_{k \in \Bbb N}$ such  that with each fixed $i$, $f_{n_{k}}(A_{i})\rightarrow \alpha_{i}$ for each  $\alpha_{i} \in \Bbb C$. Define $f= \sum_{i=1}^{\infty}\alpha_{i} \chi_{A_{i}}$.  Then we have
\begin{eqnarray}
\nonumber \|g-Wf\|_{\infty}&\leq& \|g-Wf_{n}\|_{\infty}+\|Wf_{n}-Wf_{n_{k}}\|_{\infty}+\|Wf_{n_{k}}-Wf\|_{\infty}\\
 \nonumber &\leq& \frac{\epsilon}{3}+\frac{\epsilon}{3}+\|W\|\|f_{n_{k}}-f\|_{\infty}\\
  \nonumber &\leq& \frac{\epsilon}{3}+\frac{\epsilon}{3}+\|W\|\sup_{i\in \Bbb N}|f_{n_{k}}(A_{i})-\alpha_{i}|\\
   \nonumber &\longrightarrow&  0\\
   \nonumber
\end{eqnarray}
 Obviously  $W(L^{\infty}(\mu))$ is closed in $L^{\infty}(\mu)$. \\

$(b)$ Assume $W$ has closed range and is injective so there exists a constant $d>0$ such that $\|Wf\|_{\infty}\geq d \|f\|_{\infty}$  for any $f\in L^{\infty}(\mu)$.  Take $\delta=d$.   Then ,
\begin{eqnarray}
\nonumber |\sum_{i=1}^{n}u_{i}\chi_{X}\circ \varphi_{i}| &\leq & \sum_{i=1}^{n}u_{i}\\
\nonumber
\end{eqnarray}
Therefore,
\begin{eqnarray}
\nonumber \sum_{i=1}^{n}u_{i} &\geq& \sum_{i=1}^{n}u_{i}\chi_{X}\circ \varphi_{i}\\
 \nonumber &\geq & d\\
\nonumber
\end{eqnarray}
 so $u \geq \delta$ on $X$ . The proof is now complete.
\end{proof}

In the next theorem we obtain the polar decomposition of $W$ as a bounded operator on the Hilbert space $L^2$.
\begin{theorem}
 Suppose $u_{i}(\varphi_{j}^{-1})= 0,  ~~~~i\neq j$. The unique polar decomposition of $W=\sum_{i=1}^{n}u_{i}C_{\varphi_{i}}$ is $V|W|$ where $|W|(f)=M_{J}f$, $V(g)=\sum_{i=1}^{n}u_{i}\frac{\chi_{B}g}{\sqrt{J}}\circ \varphi_{i}$ and $B=Coz (J=\sum_{i=1}^{n}h_{i}E_{i}(u_{i}^{2})\circ \varphi_{i}^{-1})$.
\end{theorem}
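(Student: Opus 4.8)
The plan is to invoke the standard characterization of the polar decomposition: if $W = V|W|$ with $|W| = (W^{*}W)^{1/2}$ nonnegative, $V$ a partial isometry, and $\ker V = \ker W$, then such a factorization exists and is unique. Accordingly I would split the argument into three tasks: identify $|W|$, verify that the stated $V$ is the correct partial isometry, and then confirm the factorization together with the kernel condition that forces uniqueness. Most of the analytic content is already packaged in the first theorem of this section.

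First I would compute $|W|$. By the first theorem of this section, the hypothesis $u_{i}(\varphi_{j}^{-1})=0$ for $i\neq j$ makes every cross term in $W^{*}W$ vanish, leaving $W^{*}Wf = Jf$ with $J=\sum_{i=1}^{n}h_{i}E_{i}(|u_{i}|^{2})\circ\varphi_{i}^{-1}\geq 0$; that is, $W^{*}W=M_{J}$, multiplication by $J$. The nonnegative square root of a multiplication operator is again a multiplication operator, so $|W|=M_{\sqrt{J}}$. Writing $B=Coz\,J$, the closure of the range of $|W|$ is $L^{2}_{\mid B}(\mu)$ and $\ker|W|=L^{2}_{\mid X\backslash B}(\mu)$, which by the kernel description used in that theorem is exactly $\ker W$.

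Next I would check that the stated $V$ is a partial isometry with initial space $(\ker W)^{\perp}=L^{2}_{\mid B}(\mu)$. The key observation is that $V(g)=\sum_{i=1}^{n}u_{i}\,(\chi_{B}g/\sqrt{J})\circ\varphi_{i}=W\bigl(M_{\chi_{B}/\sqrt{J}}\,g\bigr)$, i.e.\ $V=W M_{\chi_{B}/\sqrt{J}}$; the factor $\chi_{B}/\sqrt{J}$ is well defined because $\chi_{B}$ vanishes precisely where $J=0$. Using $W^{*}W=M_{J}$ I would then compute, for $g$ supported on $B$,
\[
\|Vg\|_{2}^{2} = (W^{*}W(\chi_{B}g/\sqrt{J}),\,\chi_{B}g/\sqrt{J}) = \int_{X}J\,\frac{|\chi_{B}g|^{2}}{J}\,d\mu = \int_{B}|g|^{2}\,d\mu = \|g\|_{2}^{2},
\]
while $V$ annihilates functions supported on $X\backslash B$. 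Hence $V$ is a partial isometry with initial space $L^{2}_{\mid B}(\mu)=\overline{\operatorname{ran}}\,|W|$ and $\ker V = L^{2}_{\mid X\backslash B}(\mu)=\ker W$.

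Finally I would assemble the decomposition. Since $W$ annihilates every function supported on $X\backslash B$ (as $\|Wf\|_{2}^{2}=\int_{X}J|f|^{2}\,d\mu=0$ there), we have $WM_{\chi_{B}}=W$, and therefore
\[
V|W|f = W M_{\chi_{B}/\sqrt{J}}\,M_{\sqrt{J}}f = W(\chi_{B}f) = Wf.
\]
Combined with $\ker V=\ker W$ and $|W|\geq 0$, the uniqueness of the polar decomposition then yields the claim. I expect the only genuine subtlety to be the bookkeeping around $\sqrt{J}$ and $\chi_{B}/\sqrt{J}$ — confirming the square root is taken in the correct (multiplication-operator) sense and that dividing by $\sqrt{J}$ is harmless on $Coz\,J$ — together with the clean cancellation of the cross terms in $W^{*}W$, which is precisely what the orthogonality hypothesis $u_{i}(\varphi_{j}^{-1})=0$, $i\neq j$, inherited from the first theorem, guarantees.
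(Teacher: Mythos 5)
Your proposal follows essentially the same route as the paper: compute $W^{*}W=M_{J}$, identify $|W|=M_{\sqrt{J}}$, take $V=WM_{\chi_{B}/\sqrt{J}}$ as the partial isometry with initial space $L^{2}(B)=(\ker W)^{\perp}$, and conclude $W=VM_{\sqrt{J}}$. You additionally verify the isometry of $V$ on $L^{2}(B)$ and the identity $V|W|=W$, which the paper asserts without computation, and your identification $|W|=M_{\sqrt{J}}$ (rather than the $M_{J}$ written in the theorem statement) agrees with the paper's own proof.
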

\begin{proof}
We have that
\begin{eqnarray}
\nonumber
\|Wf\|_{2}^{2} &=& (Wf , Wf)\\
 \nonumber & =& (W^{*}Wf , f)\\
\nonumber &=& \int_{X}J|f|^{2}d\mu = \int_{B}J|f|^{2}d\mu+\int_{X\backslash B}J|f|^{2}d\mu\\
\nonumber &=& \int_{B}J|f|^{2}d\mu
\nonumber
\end{eqnarray}
where $J=\sum_{i=1}^{n}h_{i}E_{i}(u_{i})^{2}\circ \varphi_{i}^{-1}$. Then $\ker W=L^{2}(X\backslash B)=(L^{2}(B))^{\bot}$. For  each $f\in L^{2}(\mu)$ write $f=\chi_{B}f+\chi_{X\backslash B}f$ so that $Wf=W\chi_{B}f$. We may define partial isometry $V$ with initial space $(\ker W)^{\bot}=L^{2}(B)$ and final space $Ran W$ by $V(g)=\sum_{i=1}^{n}u_{i}\frac{\chi_{B}g}{\sqrt{J}}\circ \varphi_{i}, ~~~~~g\in L^{2}(\mu)$. Then the unique polar for $W$ is given $W=VM_{\sqrt{J}}$.
\end{proof}
Finally, the next two assertions we investigate invertibility of $W$.
\begin{theorem}
Let $(X, \Sigma, \mu)$ be apurely atomic measure space, $0\neq u_{i}\in L^{\infty}(\mu)$ and $W$ be a sum finite of weighted composition operators on $L^{p}(\mu)$. If there is a positive integer $N_{i}$ such that $\varphi_{i}^{N_{i}}(A_{n})=A_{n}$ up to a null set for all $n\geq 1$ and $u_{i}(\varphi_{j})=0,~~~i\neq j$ then
\begin{enumerate}
\item[(a)] $W$ is invertible.
\item[(b)] The set function $E$ that is defined as $E(B)=M_{\chi_{B}\circ v}$ for all borel sets $B$ of $\Bbb C$ is a  spectral measure where $v=\sum_{i=1}^{n}u_{i}u_{i}\circ \varphi_{i}\cdots u_{i}\circ \varphi_{i}^{N-1}$, $N=[N_{1}, \cdots, N_{n}]$.
 \end{enumerate}
\end{theorem}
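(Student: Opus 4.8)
The plan is to reduce both assertions to the single computation $W^{N}=M_{v}$, after which (a) becomes a standard fact about an operator whose $N$-th power is invertible and (b) becomes the routine verification that the spectral measure of a multiplication operator has the stated form. First I would record the basic cancellation. Writing $T_{i}=u_{i}C_{\varphi_{i}}$, a direct computation gives
\[
T_{i}T_{j}f=u_{i}\,(u_{j}\circ\varphi_{i})\,(f\circ\varphi_{j}\circ\varphi_{i}),
\]
so the hypothesis $u_{j}\circ\varphi_{i}=0$ for $i\neq j$ yields $T_{i}T_{j}=0$ whenever $i\neq j$. Next, since $N=[N_{1},\dots,N_{n}]$ is a common multiple of the $N_{i}$ and $\varphi_{i}^{N_{i}}(A_{n})=A_{n}$ up to null sets, each $\varphi_{i}^{N}$ is the identity on $X$ up to a null set. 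Expanding $W^{N}=(\sum_{i}T_{i})^{N}$ as a sum over words $T_{i_{1}}\cdots T_{i_{N}}$, every word containing two adjacent distinct indices vanishes by the cancellation above, so only the ``monochromatic'' words survive and $W^{N}=\sum_{i=1}^{n}T_{i}^{N}$.

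Now I would compute each $T_{i}^{N}$. Iterating, $T_{i}^{N}f=u_{i}(u_{i}\circ\varphi_{i})\cdots(u_{i}\circ\varphi_{i}^{N-1})\,(f\circ\varphi_{i}^{N})$, and since $\varphi_{i}^{N}=\mathrm{id}$ this is multiplication by $w_{i}:=u_{i}(u_{i}\circ\varphi_{i})\cdots(u_{i}\circ\varphi_{i}^{N-1})$. Summing and setting $v=\sum_{i}w_{i}$ gives $W^{N}=M_{v}$, with $v$ exactly the function in the statement. For (a) I would then invoke that $M_{v}$ is invertible on $L^{p}(\mu)$ precisely when $v$ is bounded away from zero $\mu$-a.e.; granting this (which is where the hypotheses $0\neq u_{i}\in L^{\infty}(\mu)$ on the atomic space enter, to guarantee $\operatorname*{ess\,inf}|v|>0$), invertibility of $W$ follows because $W^{N}$ invertible forces $W$ invertible: the identity
\[
W\cdot\bigl(W^{N-1}(W^{N})^{-1}\bigr)=I=\bigl((W^{N})^{-1}W^{N-1}\bigr)\cdot W
\]
exhibits a two-sided inverse.

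For (b) I would verify the projection-valued-measure axioms directly from $\chi_{B}\circ v=\chi_{v^{-1}(B)}$. Each $E(B)=M_{\chi_{v^{-1}(B)}}$ is idempotent (and, on $L^{2}$, self-adjoint, hence an orthogonal projection) since $\chi_{S}^{2}=\chi_{S}$; moreover $E(\emptyset)=0$ and $E(\Complex)=M_{\chi_{X}}=I$; multiplicativity $E(B_{1})E(B_{2})=M_{\chi_{v^{-1}(B_{1})}\chi_{v^{-1}(B_{2})}}=E(B_{1}\cap B_{2})$ follows from $v^{-1}(B_{1})\cap v^{-1}(B_{2})=v^{-1}(B_{1}\cap B_{2})$; and countable additivity in the strong operator topology follows from $v^{-1}\bigl(\bigsqcup_{k}B_{k}\bigr)=\bigsqcup_{k}v^{-1}(B_{k})$. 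Thus $E$ is the spectral measure of the normal multiplication operator $M_{v}=W^{N}$.

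I expect the main obstacle to lie not in these verifications but in the bookkeeping at the two algebraic steps: establishing $T_{i}T_{j}=0$ with the correct factor $u_{j}\circ\varphi_{i}$ (so that the right instance of the hypothesis is applied) and justifying $W^{N}=\sum_{i}T_{i}^{N}$ rigorously by an induction on word length rather than by a hand-wave, together with pinning down that the hypotheses really force $\operatorname*{ess\,inf}|v|>0$, so that $M_{v}$ is genuinely invertible and not merely injective with dense range.
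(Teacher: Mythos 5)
Your reduction is exactly the one the paper itself uses: show that $W^{N}=M_{v}$ with $N=[N_{1},\dots,N_{n}]$, deduce invertibility of $W$ from invertibility of its $N$-th power, and read off the spectral measure of the multiplication operator $M_{v}$. The algebra you supply (the cancellation $T_{i}T_{j}=0$ from $u_{j}\circ\varphi_{i}=0$, the word expansion giving $W^{N}=\sum_{i}T_{i}^{N}$, and $\varphi_{i}^{N}=\mathrm{id}$ up to null sets) is carried out more carefully than in the paper, which simply asserts that $W^{N}$ is the multiplication operator induced by $v$; likewise your direct verification of the projection-valued-measure axioms for $E(B)=M_{\chi_{B}\circ v}$ replaces the paper's bare citation of Rho--Yoo. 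So in structure the two arguments coincide.

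The one step you explicitly defer --- that the hypotheses force $\operatorname{ess\,inf}|v|>0$ --- is, however, a genuine gap, and it cannot be closed from the stated assumptions. The condition $0\neq u_{i}\in L^{\infty}(\mu)$ only says that each $u_{i}$ is not the zero function; it does not prevent $u_{i}$, and hence $v$, from vanishing on some atoms or from tending to $0$ along the atoms. For instance take $n=1$, $\varphi_{1}=\mathrm{id}$, $N_{1}=1$, $X=\mathbb{N}$ with $\mu(\{k\})=2^{-k}$ and $u_{1}(k)=1/k$: then $v=u_{1}$ and $W=M_{u_{1}}$ is injective with dense range but not surjective on $L^{p}(\mu)$, so the conclusion of part (a) fails. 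An additional hypothesis such as $\inf_{n}|v(A_{n})|>0$ (or each $|u_{i}|$ bounded below on its cozero set together with a condition ruling out cancellation in the sum defining $v$) is needed. Be aware that the paper's own proof has exactly the same hole: it writes $W^{N}(g/v)(A_{n})=g(A_{n})$ without checking that $v$ is nonvanishing or that $g/v\in L^{p}(\mu)$. In short, your proposal is faithful to the paper's argument and more rigorous where the argument is sound, but the obstacle you flagged at the end is not mere bookkeeping --- it is precisely the point at which both your proof and the paper's break down as stated.
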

\begin{proof}
$(a)$ Not that $\ker W^{r}\subseteq \ker W^{r+1}$ and $W^{r+1}(L^{p}(\mu))\supseteq W^{r}(L^{p}(\mu))$. If there is a positive integer $N_{i}$ such that $\varphi_{i}^{N_{i}}(A_{n})=A_{n}$ up to a null set for all $n\geq 1$ then $W^{N}$ is a multiplication operator induced by function $v=\sum_{i=1}^{n}u_{i}u_{i}\circ \varphi_{i}\cdots u_{i}\circ \varphi_{i}^{N-1}$,where  $N=[N_{1}, \cdots, N_{n}]$.\\
If $f\in \ker W^{N}$ then $W^{N}f(A_{n})=0$ for all $n\geq 1$. We have that $vf(A_{n})=0$ therefor $f=0$, $\mu-a.e$ on $X$. So $W$ is injective.\\
Let $g\in L^{p}(\mu)$ then $W^{N}(\frac{g}{v})(A_{n})=g(A_{n})$ for all $n\geq 1$. So  $W$ is surjective.\\
$(b)$ As observed by Rho and Yoo (\cite{Rho}, example 1), the multiplication operator $M_{\chi_{v}}$ is spectral. In fact the spectral measure $E$ is given by $E(B)=M_{\chi_{B}\circ v }$ for all Borel set $B$ of $\Bbb C$.
\end{proof}

\begin{theorem}
Let  $W=\sum_{i=1}^{n}u_{i}C_{\varphi_{i}}$ be a bounded operator on $L^{2}(\mu)$ and $u_{i}(\varphi_{j}^{-1})=0 ,~~~ i\neq j$. The following statements are equivalent.
\begin{enumerate}
\item[(a)] $W$ is  injective.
\item[(b)]  $J=\sum_{i=1}^{n}h_{i}E_{i}(|u_{i}|^{2})\circ \varphi_{i}^{-1} >0 ~~~~~\mu - a.e$ on $X$.
\item[(c)] whenever $J(E)=0$ for $E\in \Sigma$, $\mu(E)=0$.
\end{enumerate}
\end{theorem}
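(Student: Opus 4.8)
The plan is to reduce all three statements to the positivity of the nonnegative multiplier $J$, exploiting the identity established in the proof of the first theorem of this section, namely that $W^{*}Wf=Jf$, or equivalently
$$\|Wf\|_{2}^{2}=(W^{*}Wf,f)=\int_{X}J|f|^{2}\,d\mu \qquad (f\in L^{2}(\mu)).$$
Since $J=\sum_{i=1}^{n}h_{i}E_{i}(|u_{i}|^{2})\circ\varphi_{i}^{-1}\ge 0$ everywhere, this formula shows that the vanishing of $Wf$ is controlled entirely by the set on which $J$ is zero, so injectivity of $W$ ought to translate directly into the statement that $J>0$ almost everywhere.

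For the equivalence $(a)\Leftrightarrow (b)$ I would argue as follows. If $(b)$ holds and $Wf=0$, then $\int_{X}J|f|^{2}\,d\mu=0$; as $J>0$ $\mu$-a.e. and the integrand $J|f|^{2}$ is nonnegative, this forces $|f|^{2}=0$ $\mu$-a.e., hence $f=0$ and $W$ is injective. For the converse I would use the contrapositive: if $(b)$ fails then the set $N=\{x\in X:J(x)=0\}$ has positive measure, and by $\sigma$-finiteness of $(X,\Sigma,\mu)$ I can select a measurable $E_{0}\subseteq N$ with $0<\mu(E_{0})<\infty$. Then $\chi_{E_{0}}\in L^{2}(\mu)$ is nonzero, yet $\|W\chi_{E_{0}}\|_{2}^{2}=\int_{E_{0}}J\,d\mu=0$, so $\chi_{E_{0}}\in\ker W$ and $W$ is not injective.

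Finally, $(b)\Leftrightarrow (c)$ is essentially a reformulation once one reads $J(E)=0$ as $J=0$ $\mu$-a.e. on $E$, which for the nonnegative function $J$ is the same as $\int_{E}J\,d\mu=0$. Indeed, $(b)$ asserts $\mu(N)=0$ where $N=\{J=0\}$; if $J(E)=0$ then $E\subseteq N$ up to a null set, whence $\mu(E)=0$, giving $(c)$. Conversely, assuming $(c)$ and taking $E=N$, we have $J(N)=0$ by the definition of $N$, so $(c)$ yields $\mu(N)=0$, which is precisely $(b)$. I do not expect a genuine obstacle here; the only points demanding care are the correct interpretation of the notation $J(E)=0$ in $(c)$ and the appeal to $\sigma$-finiteness to extract a test set of finite measure inside $\{J=0\}$ in the proof of $(a)\Rightarrow(b)$.
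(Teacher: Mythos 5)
Your proposal is correct and rests on exactly the same ingredients as the paper's proof: the identity $\|Wf\|_{2}^{2}=\int_{X}J|f|^{2}\,d\mu$, testing injectivity against characteristic functions of finite-measure subsets of $\{J=0\}$ (using $\sigma$-finiteness), and reading $J(E)=0$ as $\int_{E}J\,d\mu=0$. The only difference is organizational --- you prove $(a)\Leftrightarrow(b)$ directly and then $(b)\Leftrightarrow(c)$, whereas the paper runs the cycle $(b)\Rightarrow(a)\Rightarrow(c)\Rightarrow(b)$ --- which does not change the substance.
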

\begin{proof}
 $(b)\Rightarrow (a)$ Take any $f\in\ker W$, then we have
\begin{eqnarray}
\nonumber
0=\|Wf\|_{2}^{2} &=& (Wf , Wf)\\
 \nonumber & =& (W^{*}Wf , f)\\
\nonumber &=& \int_{X}J|f|^{2}d\mu = \int_{Coz f}J|f|^{2}d\mu+\int_{X\backslash Coz f}J|f|^{2}d\mu\\
\nonumber &=& \int_{Coz f}J|f|^{2}d\mu\\
\nonumber
\end{eqnarray}
Since  $J >0 ~~~~~\mu - a.e$ on $Coz f$, it follows that $\mu(Coz f)=0$ or $f=0~~~\mu - a.e$ on $X$.\\
$(a)\Rightarrow (c)$ Let  $E\in \Sigma$ satisfy $J(E)=0$ we may also assume $\mu(E)<\infty$. Then $\chi_{E} \in L^{2}(\mu)$ and $\|W\chi_{E}\|_{2}^{2}=\int_{X}J\chi_{E}d\mu=\int_{E}Jd\mu=0$. Now the injectivity of $W$ implies that $\chi_{E}=0, ~~~\mu - a.e$ on $X$. Hence $\mu(E)=0$. \\
$(c)\Rightarrow (b)$ Put $B=Coz J$. Clearly, $X\backslash B\in \Sigma$. Moreover, since $J(X\backslash B)=0$
We must have $\mu(X\backslash B)=0$. This shows that  $J >0, ~~~~~\mu - a.e$ on $X$.

\end{proof}


\begin{thebibliography}{99}


\bibitem{bon}J. T. Capbell and J. E.  Jaminson,
\emph{On some classes of wieghed composition operators}, Glasgow Math. J., \textbf{32}(1900),  74-87.
\bibitem{dls}J. T. Chan,
\emph{A note  on campact wieghed composition operators on $L^{p}(\mu)$}, Acta Sci. Math. (Szeged),   \textbf{56}(1992),  165-168.

\bibitem{sh} Ch. Yan, Chou, W.-L. Day and J.  Shyang,
\emph{On the Banach-Ston problem for  $L^{p}(\mu)$-spaces}, Taiwanese J. Math.,  \textbf{10}(1)(2006),  233-241.

\bibitem{ey} Y. Estaremi, \emph{Unbounded weighted conditional expectation operators}, Complex Anal. Oper. Theory, {\bf10}(2016), 567-580.

\bibitem{ej} Y. Estaremi and M. R. Jabbarzadeh, \emph{Weighted Lambert type operators on $L^{p}$-spaces}, Oper. Matric., {\bf 7}(1)(2013), 101-116.

\bibitem{gh1}M. R. Jabbarzadeh and Y. Estarmi,
\emph{Essential norm of substitution oprators on $L^{p}(\mu)$-spaces}, Indian J. Pure Appl.  Math., \textbf{43}(3)(2012), 263-278.

\bibitem{gh}T. Hoover A. Lambert, and J. Quinn,
\emph{The Marcov proses determind by a wieghed composition operators}, Studia Math.  Hungar., \textbf{72}(1982), 225-235.

\bibitem{j1}H. Komowitz,
\emph{Compact weighed endomorphisms of $C(X)$}, Proc. Amer. Math. Soc., \textbf{83}(1982), 517-521.

\bibitem{lambe}
A. Lambert, \emph{Localising sets for sigma-algebras and related point
transformations}, Proc. Roy. Soc. Edinb. Sect. A, {\bf 118}(1991), 111-118.
\bibitem{ls}K. Narita and H. Takagi,
\emph{Campact composition operators between $L^{p}(\mu)$-spaces, Harmonic / analytic function spaces and linear operators}, Kyoto University Research Hnstitute for Mathematic Sciences Kokyuroku, \textbf{1049}(1998), 129-136 (Japanese).

\bibitem{la2}E. A. Nordgen,
\emph{Composition operators on Hilbert spaces}, Lecture Note in Math., Springer  Berlin,  \textbf{Vol 693}(1978), 37- 63.

\bibitem{la}S. K. Parrott,
\emph{Weighed translation operators}, Thesis, Univercity of Machigan Ann Albor,  \textbf{}1965.
\bibitem{rao} M. M. Rao, \emph{Conditional measure and applications}, Marcel Dekker, New York, 1993.
\bibitem{Rho}J. C. Rho, J. K. Yoo,\emph{(E)- super Decomposable operator, }1993.
\bibitem{bon} S, Shamsi gamchi, A. Ebadian, a. Alishahi,
\emph{Basic Properties Of Finite Sum Of Weighted composition operators  }2017.
\bibitem{lau}R. K. Singh,
\emph{Compact and  quasinormal composition opetators}, Proc. Amer. Math. Soc., \textbf{45}(1974), 80-82.

\bibitem{lau6}R. K. Singh and and A. Kumar,
\emph{Multiplication opetators and composition opetators with closed ranges}, Bull. Aust. Math. Soc.,  \textbf{16}(1977), 247-252.

\bibitem{n2}H. Takagi and K. Yocouchi,
\emph{Composition operators  between $L^{p}$-spaces, The stracture of spaces of analyticand harmonic functions and the theory of operators on them}, Kyoto University Research Institute for Mathematical Sciences Kokyuroku,  \textbf{946}(1996), 18-24.
\bibitem{n1}H. Takagi and K. Yocouchi,
\emph{multiplication  and composition opetators between $L^{p}$- spaces, Functional spasec}, Springer, 1971.

%
\end{thebibliography}
\end{document}